%
%
%
%   AMS-LaTeX 1.2 file for journals, based on amsart.cls.
%
%
%____________________________________________________________________
%
%   F. Forstneric and E. F. Wold
%
%   EMBEDDINGS OF INFINITELY CONNECTED PLANAR DOMAINS IN C^2
%
%   Version: March 2012
%
%   Submitted to: JAMS
%   
%____________________________________________________________________

\documentclass[11pt]{amsart}
\usepackage{pstricks,pst-plot}

\parskip=\smallskipamount

\newtheorem{theorem}{Theorem}[section]
\newtheorem{lemma}[theorem]{Lemma}
\newtheorem{corollary}[theorem]{Corollary}

\theoremstyle{definition}
\newtheorem{definition}[theorem]{Definition}
\newtheorem{example}[theorem]{Example}

\newtheorem{remark}[theorem]{Remark}
\newtheorem{question}[theorem]{Question}

\numberwithin{equation}{section}

\newcommand{\B}{\mathbb{B}}
\newcommand{\C}{\mathbb{C}}
\newcommand{\D}{\mathbb{D}}

\newcommand{\N}{\mathbb{N}}

\newcommand{\Z}{\mathbb{Z}}
\renewcommand{\P}{\mathbb{P}}
\newcommand{\R}{\mathbb{R}}

\newcommand\Id{\mathrm{Id}}

\newcommand{\cC}{\mathcal{C}}

\newcommand{\cO}{\mathcal{O}}

\newcommand{\cU}{\mathcal{U}}

\newcommand\wt{\widetilde}
\newcommand\hra{\hookrightarrow}

\newcommand{\Aut}{\mathrm{Aut}\,}

\newcommand\dist{\mathrm{dist}}

\newcommand\bs{\backslash}

\newcommand\wh{\widehat}

\newcommand{\ol}{\overline}

\usepackage{amssymb}
\input xy
\xyoption{all}

%
%
%  THE DOCUMENT
%
%

\begin{document}
\title[Embeddings of infinitely connected planar domains into $\mathbb C^2$]
{Embeddings of infinitely connected \\ planar domains into $\mathbb C^2$}
\author{Franc Forstneri\v{c}}
\author{Erlend Forn\ae ss Wold}
\address{F.\ Forstneri\v c, Institute of Mathematics, Physics and Mechanics, 
University of Ljubljana, Jadranska 19, 1000 Ljubljana, Slovenia}
\email{franc.forstneric@fmf.uni-lj.si}
\address{E.\ F. Wold, Matematisk Institutt, Universitetet i Oslo,
Postboks 1053 Blindern, 0316 Oslo, Norway}
\email{erlendfw@math.uio.no}
%
%    General info
%
\subjclass[2000]{Primary: 32C22, 32E10, 32H05, 32M17; Secondary: 14H55}
\date{\today}
\keywords{Riemann surfaces, complex curves, proper holomorphic embeddings}

\begin{abstract}
We prove that every circled domain in the Riemann sphere admits a 
proper holomorphic embedding into the affine plane $\C^2$.
\end{abstract}

\maketitle

\section{Introduction}
\label{sec:Intro}
It has been a longstanding open problem whether every open 
(noncompact) Riemann surface, in particular, every domain in
the complex plane $\C$, admits a proper holomorphic embedding into 
$\C^2$. (By a {\em domain} we understand a connected open set.)
Equivalently,

{\em Is every open Riemann surface biholomorphic to a smoothly
embedded, topologically closed complex curve in $\C^2$?}

Every open  Riemann surface properly embeds in $\C^3$ and immerses in $\C^2$,
but there is no constructive method of removing 
self-intersections of an immersed curve in $\C^2$. For a history of this subject 
see \cite{ForstnericWold}  and  \cite[\S 8.9--\S 8.10]{F:book}.

In this paper we prove the following general result in this direction.

%
%
%  MAIN THEOREM - CIRCLED DOMAINS
%
%
\begin{theorem}\label{main}
Every domain in the Riemann sphere $\P^1=\C\cup\{\infty\}$ with at most countably many 
boundary components, none of which are points, admits a proper holomorphic 
embedding in $\C^2$.
\end{theorem}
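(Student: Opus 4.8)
The plan is to combine a uniformization reduction with an inductive exhaustion scheme in the spirit of the Forstneri\v c--Wold method for embedding bordered Riemann surfaces, the core difficulty being the simultaneous control of \emph{properness} and \emph{injectivity} through an infinite limit. First I would normalize the domain. Let $\Omega\subset\P^1$ be given. By the uniformization theorem of He and Schramm for domains with at most countably many complementary components, $\Omega$ is conformally equivalent to a \emph{circle domain} $D=\P^1\setminus\bigcup_j\overline{\Delta_j}$, where the $\Delta_j$ are pairwise disjoint round disks. Since the dichotomy between degenerate (point) and non-degenerate complementary components is a conformal invariant, the hypothesis that no boundary component of $\Omega$ is a point guarantees that every $\partial\Delta_j$ is a genuine circle and that $D$ has no point components. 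After a M\"obius transformation I may assume $\infty\in D$, so that it suffices to construct a proper holomorphic embedding $D\hra\C^2$.

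Next I would set up an exhaustion $D=\bigcup_n\Omega_n$ with $\Omega_n\Subset\Omega_{n+1}$, each $\Omega_n$ a \emph{finitely} connected bordered subdomain with smooth boundary and $\cO(D)$-convex closure; its boundary consists of finitely many circles lying just inside the first $m_n$ of the $\partial\Delta_j$, together with finitely many ``cut'' curves enclosing all remaining disks and their accumulation set. The base case is the known fact that a finitely connected bordered planar domain embeds properly and holomorphically into $\C^2$. Inductively, given a proper holomorphic embedding $f_n$ of $\overline{\Omega_n}$ whose image lies in the ball of radius $R_n$ and sends the collar $\overline{\Omega_n}\setminus\Omega_{n-1}$ into $\{\,|w|>R_{n-1}\,\}$, I would produce $f_{n+1}$ on $\overline{\Omega_{n+1}}$ that agrees with $f_n$ to within $\varepsilon_n$ on all of $\overline{\Omega_n}$, remains a holomorphic embedding, and maps the new collar $\overline{\Omega_{n+1}}\setminus\Omega_n$ into the shell $\{\,R_n<|w|<R_{n+1}\,\}$ for a suitable $R_{n+1}>R_n$. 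With $\sum_n\varepsilon_n<\infty$ and $R_n\uparrow\infty$, the maps converge uniformly on compacta to a holomorphic $f\colon D\to\C^2$; the nested-shell structure makes $|f|$ an exhaustion function, hence $f$ proper, while summable closeness of the embeddings on fixed compacta makes $f$ injective with nonvanishing differential.

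Realizing the inductive step is the heart of the matter. Passing from $\overline{\Omega_n}$ to $\overline{\Omega_{n+1}}$ requires attaching, along the cut curves, the images of the newly captured annular pieces (which contain the disks $\Delta_j$ with $m_n<j\le m_{n+1}$) and routing them out into the far shell $\{\,R_n<|w|<R_{n+1}\,\}$ \emph{without} colliding with the retained image $f_n(\overline{\Omega_n})$ and without introducing self-intersections. I would carry this out by first applying Mergelyan/Oka--Weil approximation on the $\cO(D)$-convex set $\overline{\Omega_n}$ to extend the map holomorphically across the cut curves, and then using holomorphic automorphisms of $\C^2$ furnished by Anders\'en--Lempert theory to push the new pieces outward; to carve out collision-free routes I would first \emph{expose} a boundary point on each outgoing curve and drag a neighborhood of it along a chosen embedded path toward infinity, converting that curve into an escaping end.

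The hardest part will be this coordination of push-to-infinity with injectivity. In $\C^2$ a complex curve admits no general-position argument for removing double points, so injectivity cannot be restored after the fact: it must be \emph{preserved} at every stage. I expect to achieve this by keeping each automorphism close to the identity off thin neighborhoods of the exposed arcs, by placing the outgoing shell in a region that the retained image avoids, and by quantitative estimates keeping the images of disjoint compacta a definite distance apart; these same estimates are precisely what survives into the limit to certify that $f$ is an embedding. I anticipate that this interplay, rather than any soft topological input, is the decisive and most delicate ingredient of the proof.
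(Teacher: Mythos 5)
Your proposal follows the same school of techniques as the paper (He--Schramm uniformization, exposed boundary points, Anders\'en--Lempert automorphisms, an inductive limit), but the inductive scheme you set up has genuine gaps, and in two places it is internally inconsistent. First, the bookkeeping: every point of $b\Omega_n$ lies both in $\overline{\Omega}_n$, where you require $|f_{n+1}-f_n|<\varepsilon_n$ and where $f_n$ maps into the ball of radius $R_n$, and in the collar $\overline{\Omega}_{n+1}\setminus\Omega_n$, which you require $f_{n+1}$ to map into the shell $\{R_n<|w|<R_{n+1}\}$; these demands are incompatible unless $f_n(b\Omega_n)$ already hugs the sphere $\{|w|=R_n\}$ to within $\varepsilon_n$, which your hypotheses do not provide. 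The same tension recurs, more seriously, with the very technique you invoke to realize the step: exposing a boundary point and dragging it toward infinity ``converts that curve into an escaping end,'' i.e.\ produces an unbounded image, which cannot be reconciled with your requirement that each $f_{n+1}(\overline{\Omega}_{n+1})$ be a compact set inside a ball with collars in bounded shells. The paper resolves exactly this tension by abandoning the nested-shell architecture: the intermediate embeddings $g_n\circ f_n$ are deliberately unbounded (rational shears with poles at the exposed points send the boundary curves to infinity as pairwise disjoint wedges), closeness is demanded only on compacts $K_{m_n}\Subset\Omega$ --- never on closures --- and properness comes not from shells but from automorphisms $\phi_n$ expelling $b\Omega_n$ together with all not-yet-opened complementary discs from $\overline{\B}_n$ (condition (\ref{pogoj_B})), whose compositions converge to a Fatou--Bieberbach map $\Phi$ on a domain containing the limit curve; the final embedding is the composition $\Phi\circ G$.

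Second, and decisively, you give no mechanism for the polynomial convexity that Anders\'en--Lempert theory requires. To push newly attached pieces outward while staying close to the identity on the part you keep, the kept set must be polynomially convex, and the moving pieces must be isotoped so that the union stays polynomially convex at every time. But the image of a compact multiply connected curve in $\C^2$ is in general \emph{not} polynomially convex: its hull fills the holes bounded by the image boundary circles, and your new collar pieces are attached precisely along those circles. In the paper this is the real purpose of the exposed points and rational shears (not collision avoidance): opening at least one boundary circle in every connected component of $\widehat{K}_{m_n}\setminus K_{m_n}$ and sending it to infinity is what makes the image of $K_{m_{n+1}}$ polynomially convex, and the wedge geometry at infinity makes $\pi_2$ proper on the image so that the unbounded configuration of boundary curves can be controlled. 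Moreover, the expelling automorphism must remove not only the boundary curves but also the images of the infinitely many remaining discs, which may accumulate everywhere along $b\Omega$; Lemma~\ref{remove} achieves this by clustering the offending discs into finitely many smoothly bounded discs (Lemma~\ref{division} and Remark~\ref{rem:closed}) that can be moved out by a finite isotopy through polynomially convex configurations. Your sketch, which keeps those discs outside the cut curves and treats exposure merely as a routing device for injectivity, has no substitute for either ingredient; this is where an execution of your plan would break down.
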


By the uniformization theorem of He and Schramm \cite{HeSchramm},
every domain in Theorem \ref{main} is conformally equivalent to a 
{\em circled domain}, that is, a domain whose complement is a 
union of pairwise disjoint closed round discs. 

We prove the same embedding theorem also for generalized circled domains whose 
complementary components are discs and points (punctures), 
provided that all but finitely many of the punctures belong to 
the cluster set of the non-point boundary components 
(see Theorem \ref{main2}). 
In particular, every domain in $\C$ or $\P^1$  
with at most countably many boundary components, 
at most finitely many of which are isolated points, admits a proper
holomorphic embedding into $\C^2$ 
(see Corollary \ref{cor:circled-C} and Example \ref{ex:referee}).

For {\em finitely connected planar domains} without isolated 
boundary points,  Theorem \ref{main} was proved by 
Globevnik and Stens\o nes in 1995 \cite{Globevnik-Stensones}.
More recently it was shown by the authors in \cite{ForstnericWold}
that for every embedded complex curve $\ol C\subset \C^2$, 
with smooth boundary $bC$ consisting of finitely many
Jordan curves, the interior $C=\ol C\bs bC$ admits a proper holomorphic 
embedding in $\C^2$. This result was extended to some 
infinitely connected Riemann surfaces by I.\ Majcen \cite{Majcen-2009}
under  a nontrivial additional assumption on the accumulation set of the boundary curves.
(These results can also be found in \cite[Chap.\ 8]{F:book}.)
Here we do not impose any restrictions whatsoever.

Our proof of Theorems \ref{main} and \ref{main2} is rather involved
both from the analytic as well as the combinatorial point of view, 
something that seems inevitable in this notoriously difficult classical problem.  
Theorem \ref{main} is proved in \S\ref{sec:proof} after 
we develop the technical tools in \S \ref{sec:prel} and \S\ref{sec:keylemma}. 
The main idea is to successively push the boundary components 
of an embedded complex curve in $\C^2$ to infinity by using holomorphic
automorphisms of the ambient space, 
thereby insuring that no self-intersections appear
in the process, while at the same time controlling the convergence 
of the sequence of automorphisms in the interior of the curve. We employ the most 
advanced available analytic tools developed in recent years, 
sharpening further several of them.
A novel part is our inductive scheme of dealing with an 
infinite sequence of boundary components, clustering them 
together into suitable subsets to which the analytic methods
can be applied.  

For simplicity of exposition we limit ourselves to domains in the Riemann sphere, 
although it seems likely that minor modifications yield 
similar results for domains in complex tori. Indeed,
any punctured torus admits a proper holomorphic embedding in $\mathbb C^2$, 
and the uniformization theory of He and Schramm \cite{HeSchramm} 
applies in this case as well.
For infinitely connected domains in Riemann surfaces of genus $>1$
the main problem is to find a suitable initial embedding of the 
uniformized surface into $\C^2$. One of the difficulties in working
with non-uniformized boundary components is indicated in 
Remark \ref{rem:closed}; another one can be seen in the
last part of proof of Lemma \ref{remove} which is a key 
ingredient in our construction.

Casting a view to the future, what is now needed to approach 
the general embedding problem is some progress 
on embedding \emph{punctured} Riemann surfaces into $\C^2$.
It is plausible that a method for answering 
the following question in the affirmative would 
lead to a complete solution to the embedding 
problem for planar domains with countably many
boundary components.

\begin{question}
Assume that $f\colon \ol{\mathbb D}\rightarrow\mathbb C^2$ is 
a holomorphic embedding, $K\subset\mathbb C^2\setminus f(b\mathbb D)$
is a compact polynomially convex set, $C\subset\mathbb D$ is a 
compact set with $f^{-1}(K)\subset  \mathring C$, 
and $a\in\mathbb D\setminus C$ is a point.  
Is $f$ uniformly approximable on $C$ by proper 
holomorphic embeddings $g\colon \ol{\mathbb D}\setminus\{a\}\hra \mathbb C^2$
satisfying 
\[
	 g^{-1}(g(\ol{\mathbb D}\setminus\{a\})\cap K)\subset \mathring C \ ?
\]
\end{question}

In another direction, one can ask to what extent does 
Theorem \ref{main} hold for domains in $\P^1$ 
with uncountably many boundary components.
A quintessential example of this type is a Cantor set,
i.e., a compact, totally disconnected, perfect set.
Recently Orevkov \cite{Orevkov} constructed an example of 
a Cantor set $K$ in $\C$ whose complement $\C\setminus K$
embeds properly holomorphically in $\C^2$.  (See also \cite[Theorem 8.10.4]{F:book}). 
His method, using compositions of rational shears of $\C^2$, 
does not seem to apply to a specific Cantor set. 
The methods explained in this paper 
offer some hope for future developments as indicated by Theorem \ref{main2} and 
Example \ref{ex:referee} below. 

The problem of embedding an open Riemann
surface in $\C^2$ is purely complex analytic, and there are no topological obstructions.
Indeed, Alarcon and Lopez \cite{Alarcon-Lopez}
recently proved that every open Riemann surface $X$ contains a 
domain $\Omega\subset X$, homeotopic to $X$, which embeds properly 
holomorphically in $\mathbb{C}^2.$ In particular, every
open orientable surface admits a smooth proper embedding 
in $\C^2$ whose image is a complex curve.

%%%%%%%%%%%%%%%%%%%%%%%%%%%
%													%
%  PRELIMINARIES					%
%													%
%%%%%%%%%%%%%%%%%%%%%%%%%%%
%
\section{Preliminaries}
\label{sec:prel}
In this and the following section we prepare the technical tools
that will be used in the proof. 
The main result of this section, Theorem \ref{exposing}, 
gives holomorphic embeddings of bordered 
Riemann surfaces into $\C^2$ with exposed wedges 
at finitely many boundary points.

We begin by introducing the notation. Let $\P^1=\C\cup\{\infty\}$
be the Riemann sphere. We denote by $\D=\{z\in \C\colon |z|<1\}$ the open unit disc
and by $\D_r =\{|z|<r\}$ the disc of radius $r$ centered at the origin. 
Let $(z_1,z_2)$ be complex coordinates on $\C^2$, and let
$\pi_i\colon \C^2 \to \C$ denote the coordinate
projection $\pi_i(z_1,z_2)=z_i$ for $i=1,2$.
We denote by $\mathbb B_r$ and ${\ol \B}_r$ the open and
the closed ball in $\C^2$, respectively, of radius $r$ and centered at the origin. 
Let $\Aut\C^2$ denote the group of all holomorphic automorphisms of $\C^2$.
By $\Id$ we denote the identity map; its domain will always be 
clear from the context.
We denote by $\wh L$ the polynomial hull of a compact set $L\subset \C^n$.

%
%
%  DEFINITION OF A CIRCLED DOMAIN
%
%
\begin{definition}
\label{def:circled}
A domain $\Omega\subset\mathbb P^1$ is said to be a
\emph{circled domain} if the complement 
$\mathbb P^1\setminus\Omega\ne \emptyset$ is a  
union of pairwise disjoint closed round discs 
$\ol\triangle_j\subset \mathbb P^1$ of positive radii. 
\end{definition}

Clearly a circled domain has at most countably many 
complementary discs. Mapping one of them onto $\P^1\setminus \D$
by an automorphism of $\P^1$ (a fractional linear map) 
we see that a circled domain can be thought of as being contained 
in the unit disc $\D$. 

The next lemma, and the remark following it, 
will serve to cluster together certain complementary 
components into finitely many discs;  this will enable the
use of holomorphic automorphisms for pushing these components
towards infinity in the inductive process.

%
%
%  LEMMA -DIVISION 
%
%
\begin{lemma}\label{division}
Let $\Omega\subset\mathbb P^1$ be a domain, 
let $K\subset\mathbb P^1\setminus\Omega$
be a closed set which is a union of 
complementary connected components of $\Omega$, 
and let $U\subset\mathbb P^1$ be 
an open set containing $K$.  Then 
there exist finitely many pairwise disjoint, smoothly bounded discs
$\overline{D}_j\subset U$ $(j=1,\ldots,m)$ such that 
\[
	K\subset \cup_{j=1}^m D_j, \qquad 
	bD_j\cap (\mathbb P^1\setminus\Omega) =\emptyset\ \; \text{for}\ j=1,\ldots,m.
\]
\end{lemma}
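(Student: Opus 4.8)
The plan is to first use a purely topological separation to gain some ``breathing room'' around the part of the complement we wish to enclose, and then to build a suitable smoothly bounded neighborhood by hand. Write $E=\P^1\setminus\Omega$, a compact set, and recall that $K$ is a union of connected components of $E$ and is closed, hence compact. Since $\P^1$ is compact Hausdorff, connected components of $E$ coincide with quasicomponents (intersections of relatively clopen sets). As $K\subset U$ and $K$ is disjoint from the compact set $E\setminus U$, a standard compactness argument produces a relatively clopen subset $C\subset E$ with $K\subset C\subset U$: for each $x\in E\setminus U$ the component of $x$ is disjoint from $K$, so there is a relatively clopen $C_x\ni x$ with $C_x\cap K=\emptyset$; finitely many of these cover $E\setminus U$, and $C$ is the complement in $E$ of their union. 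Then $C$ and $L:=E\setminus C$ are disjoint compacta, so (in the spherical metric) $C$ is at positive distance from the closed set $L\cup(\P^1\setminus U)$; choose $\delta_0>0$ strictly less than this distance and let $G$ be the open $\delta_0$-neighborhood of $C$. Then $\ol G\subset U$ and $\ol G\cap E\subset C$, so the collar $G\setminus C$ lies in $\Omega$. It now suffices to cover $C$ by finitely many pairwise disjoint, smoothly bounded closed discs contained in $G$: since such discs have disjoint closures, their boundaries automatically lie in $G\setminus C\subset\Omega$ and miss $E$, while $K\subset C$ is covered.

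Next I would construct a first, rough neighborhood. After applying a fractional linear map sending some point of $\Omega$ to $\infty$, I may assume $\infty\in\Omega$ and hence that $C\subset\C$ is bounded. Cover the compact set $C$ by finitely many small open squares (or round discs) contained in $G$, and let $W_0$ be their union; then $C\subset W_0$, $\ol{W_0}\subset G$, and $bW_0\subset G\setminus C\subset\Omega$. The set $\ol{W_0}$ has finitely many complementary components: one unbounded component containing $\infty$, and finitely many bounded ``holes''. The difficulty is that $W_0$ is typically multiply connected, and a hole may contain points of $E$ or points outside $U$, so I cannot simply fill the holes.

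The key observation that resolves this is that every hole $H$ meets $\Omega$. Indeed $\partial H\subset bW_0\subset\Omega$, so if $H$ were contained in $E$ its boundary would lie in $E$, which is absurd. I would therefore drain each hole rather than fill it: choose $y\in H\cap\Omega$, join $y$ to $\infty$ by an arc $\gamma\subset\Omega$ (possible since $\Omega$ is connected), and, using that $\gamma$ is compact with $\dist(\gamma,E)>0$, remove from $W_0$ a thin closed tube $\ol\tau\subset\Omega$ about $\gamma$. Since $\ol\tau\cap C=\emptyset$ we still have $C\subset W_0\setminus\ol\tau$, and since $\gamma$ joins the hole to the unbounded component, this carving merges $H$ with the exterior; as the complement only grows, the number of bounded components strictly decreases. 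After finitely many such steps I obtain an open set $W$ with $C\subset W$, $\ol W\subset G\subset U$, $bW\subset\Omega$, and $\P^1\setminus\ol W$ connected. Rounding the finitely many corners keeps $bW$ inside the open set $\Omega$ and preserves all these properties, so I may take $bW$ smooth.

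Finally, each connected component $D_j$ of $W$ has connected complement in $\P^1$ (any other component of $W$ is joined to the connected set $\P^1\setminus\ol W$ through its boundary), and is therefore a smoothly bounded topological disc; there are finitely many of them because $bW$ is a compact smooth $1$-manifold. Their closures are pairwise disjoint and contained in $U$, we have $K\subset C\subset\bigcup_j D_j$, and $bD_j\subset bW\subset\Omega$, which is exactly the assertion. The main obstacle is the one highlighted above: producing genuine (simply connected) discs inside $U$ rather than a multiply connected neighborhood. This is precisely where the two topological inputs are indispensable---the positive separation furnished by the quasicomponent argument, which gives a collar in $\Omega$ around $C$, and the connectedness of the domain $\Omega$, which lets us route the draining corridors while keeping every boundary curve off $E$.
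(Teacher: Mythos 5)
Your proof is correct, but it takes a genuinely different route from the paper's. The paper exhausts $\Omega$ by connected, smoothly bounded compact sets $K_j$; since each $K_j$ is connected, the components of $\P^1\setminus K_j$ are automatically pairwise disjoint discs with boundaries in $\Omega$, and the only remaining work is a nested-sequence compactness argument showing that, for $j$ large, the discs meeting $K$ have closures in $U$ (otherwise one extracts a decreasing sequence of such discs whose closures intersect in a connected subset of $\P^1\setminus\Omega$ meeting both $K$ and $\P^1\setminus U$, contradicting that $K$ is a union of complementary components contained in $U$). You instead work inside the compact complement $E=\P^1\setminus\Omega$: the \v{S}ura-Bura-type clopen separation (components coincide with quasicomponents in a compact Hausdorff space) plays the role that the paper's nested-disc argument plays, isolating $K$ inside $U$ with a collar in $\Omega$, and you must then manufacture simple connectivity by hand, covering $C$ by squares and draining each hole of the resulting region along an arc of $\Omega$ to $\infty$. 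What the paper's choice buys is precisely that this surgery is unnecessary: connectedness of the exhaustion sets makes the complementary pieces discs for free, with smooth boundaries and disjoint closures. What your approach buys is a more self-contained, purely point-set argument that does not invoke an exhaustion of $\Omega$ by connected smoothly bounded compacta, at the cost of the general-position and smoothing details you gloss (transverse squares, tubes of positive thickness), which are genuinely needed, e.g., to ensure that distinct components of $W$ have disjoint closures. One sentence in your reduction is inaccurate as stated: pairwise disjoint \emph{closed} discs covering $C$ need not have boundaries missing $C$, since a point of $C$ may lie on the boundary of the very disc that covers it; what is needed, and what your final construction actually delivers, is that the \emph{open} discs cover $C$, with $bD_j\subset bW\subset\Omega$ verified directly, so this slip is immaterial to the argument.
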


\begin{proof}
Let $K_j\subset \mathring K_{j+1}\subset K_{j+1}$ be 
an exhaustion of $\Omega$ by smoothly bounded connected compact 
sets $K_j$.   Then $\mathbb P^1\setminus K_j$
is the union of finitely many discs $\mathcal U_j=\{U^j_1,\ldots,U^j_{m(j)}\}$
for each $j$. Clearly $\mathcal U_j$ is a cover of $K$, 
and we claim that if $j$ is large enough then 
$\mathcal U_j$ contains a subcover whose union is relatively 
compact in $U$.  Otherwise there would exist  
a sequence $U^j_{k(j)}\supset U^{j+1}_{k(j+1)}$
of discs such that $U^j_{k(j)}\cap K\neq\emptyset$
and $\overline{U^j_{k(j)}}\cap\mathbb (\mathbb P^1\setminus U)\neq\emptyset$
for each $j$; but then $\cap_{j=1}^\infty\overline{U^{j}_{k(j)}}$
would be a connected complementary component of $\Omega$ which 
is contained in $K$ and which intersects $\mathbb P^1\setminus U$,
a contradiction.  Hence for $j$ large enough the discs $D_1,\ldots, D_m$ 
in $\cU_j$ satisfy the stated properties.
\end{proof}

%
%
%  IMPORTANT REMARK - COMPACT SET INTERSECTING COMPLEMENTARY COMPONENTS
%
%
\begin{remark}
\label{rem:closed}
When applying Lemma \ref{division}  to prove Theorem \ref{main},
it will be crucial that if $\Omega\subset\mathbb P^1$
is a circled domain with complementary discs $\overline\triangle_j$,
and if $C\subset\mathbb P^1$ is any compact set, then 
the union of all $\overline\triangle_j$'s intersecting
$C$ is a closed set which is a union of complementary 
connected components of $\Omega$. The proof is elementary and is left to the reader.
However, this fails in general if discs are replaced
by more general connected closed sets. This is one of the reasons
why our proof of Theorem \ref{main} does not apply (at least
not directly) to domains in compact Riemann surfaces of genus $>1$.
\qed \end{remark}

%
%
%    DEFINITION OF A THETA-WEDGE 
%
%
\begin{definition}
\label{def:wedge}
Let $0<\theta<2\pi$. A domain $\Omega\subset\mathbb C$
is an (open) {\em $\theta$-wedge with vertex} $a\in b\Omega$ if 
there exist a $\cC^2$ map of the form 
\[
	\varphi(\zeta)=a + \lambda \zeta + O(|\zeta|^2),\quad \lambda\ne 0
\]
in a neighborhood of the origin $0\in\C$, and for every sufficiently
small $\epsilon>0$ a neighborhood $U_\epsilon\subset \C$ of 
the point $a$ such that 
\[
		U_\epsilon \cap\Omega=\varphi \bigl(\{\zeta \in\mathbb C^* \colon 
		0< \arg(\zeta)<\theta,\ 0<|\zeta|<\epsilon\} \bigr).
\]
The closure of an open wedge will be called a {\em closed wedge}.
\end{definition}

If $\Omega$ is a domain in a Riemann surface $Y$, we apply the same 
definition of a $\theta$-wedge 
in a local holomorphic coordinate near the point $a\in b\Omega \subset Y$.
In particular, if $\Omega\subset \P^1=\C\cup\{\infty\}$ 
and $a=\infty \in b\Omega$, we apply the definition 
in the local chart $z\to 1/z$ on $\P^1$ mapping $\infty$ to $0$.

%
%
%  BORDERED RIEMANN SURFACE 
%
%
Given a nonempty subset $E$ of $\C^2$ and a linear projection $\pi\colon \C^2\to\C$,
a point $p\in E$ is said to be {\em $\pi$-exposed}, 
and $E$ is said to be {\em $\pi$-exposed at the point $p$}, if 
\begin{equation}
\label{eq:exposed}
	E \cap \pi^{-1}(\pi(p)) = \{p\}. 
\end{equation}

Recall that a {\em bordered Riemann surface} is a compact one 
dimensional complex manifold, $\overline X$, % not necessarily connected,
with boundary $bX$ consisting of finitely many
Jordan curves. The interior $X$ of a bordered Riemann surface 
is biholomorphic to a relatively compact, smoothly bounded domain 
in a Riemann surface $Y$.

We shall use  the following notion of an {\em exposed $\theta$-wedge}.

%
%
%   DEFINITION: EXPOSED POINT WITH WEDGE  
%
%
\begin{definition}
\label{def:exposed-wedge}
Let $X$ be a bordered Riemann surface, embedded as a 
smoothly bounded relatively compact 
domain in a Riemann surface $Y$. 
Pick a point $a\in bX$ and a number $\theta\in (0,2\pi)$.
An injective continuous map $f\colon \overline X\hra \mathbb C^2$ 
is said to be a 
{\em holomorphic embedding with a $\pi_1$-exposed $\theta$-wedge at $f(a)$} 
if $f$ is holomorphic in $X$, and there exists an open neighborhood 
$U$ of $a$ in $Y$ such that 
\begin{itemize}
\item[\rm (i)] 
	the domain $\Omega= (\pi_1 \circ f)(U\cap X) \subset \C$ is a 
	$\theta$-wedge with vertex $\pi_1(f(a))$ (see Def.\ \ref{def:wedge}), 
\item[\rm (ii)]
	$f(\ol{U\cap X})$ is a smooth graph over $\ol\Omega$ 
	that is holomorphic over $\Omega$, and 
\item[\rm (iii)]  
$\pi_1^{-1}({\ol\Omega})\cap f(\overline X)=f(\ol{U\cap X})$.
\end{itemize}
If the domain $\Omega\subset \C$ is instead smooth near 
the point $\pi_1(f(a))\in b\Omega$, we say that 
$f$ is a holomorphic embedding which is $\pi_1$-exposed at $f(a)$.
\end{definition}

%
%
%  REMARK - ON TERMINOLOGY
%
%
\begin{remark} {\bf (On terminology.)} 
\label{corners}
We shall consider embeddings $f\colon \ol X\hra \C^2$ that are 
holomorphic in the interior $X$ and smooth on $\ol X$, except at finitely
many boundary points where $f(X)$ has (exposed) wedges in the sense of the above definition. 
Any such map will be called  a {\em holomorphic embedding with corners}.
We shall use embeddings with corners of a particular type: 
If $X$ is a smoothly bounded, relatively compact domain in a Riemann surface $Y$,
we will construct holomorphic embeddings $\wt f \colon Y \hra \mathbb C^2$ 
and injective continuous maps $\varphi \colon \ol X\to Y$, holomorphic on $X$
and smooth at all but finitely many boundary points $a_j\in bX$, such that 
\begin{equation}\label{type}
	f :=\wt  f\circ\varphi\colon \ol X\hookrightarrow\C^2 \ 
	\mbox{ is an embedding with corners at the }a_j\mbox{'s}.
\end{equation}
In the sequel we will refer to such maps simply as {\em being of the form} (\ref{type}).
The precise choice of the Riemann surface $Y$ will not 
be important, and we will allow $Y$ to shrink around $\ol X$
without saying it every time. 
\qed \end{remark}

The following lemma shows how to create wedges at 
smooth boundary points of a domain in a Riemann surface. 

%
%
%  CREATING BOUNDARY WEDGES
%
%
\begin{lemma}
\label{wedging}
 Let $X\Subset Y$ be Riemann surfaces, and assume that 
 $bX$ is smooth outside a finite 
 set of points.  Let $a_1,\ldots,a_m\in bX$, $b_1,\ldots,b_k\in\overline X$
 be distinct points, with $bX$ smooth near the $a_j$'s,
 and let $\theta_j\in (0,2\pi)$ for $j=1,\ldots,m$.  Then 
 there exists a sequence of injective continuous maps 
 $\varphi_i \colon \overline X\rightarrow Y$, 
 holomorphic on $X$ and smooth on $\overline X\setminus\{a_1,\ldots,a_m\}$,
 satisfying the following properties:
 \begin{itemize}
 \item[(1)] $\varphi_i\rightarrow\Id$ uniformly on $\overline X$ as $i\rightarrow\infty$,
 \item[(2)] $\varphi_i(a_j)=a_j$ and $\varphi_i(X)$ is a $\theta_j$-wedge with vertex 
   $a_j$ $(j=1,\ldots,m)$, and
\item[(3)] $\varphi_i(x)=b_j + o(\dist(x,b_j)^2)$ as $x\rightarrow b_j$ $(j=1,\ldots,k)$.
 \end{itemize}
\end{lemma}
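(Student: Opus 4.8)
The plan is to reduce everything to an explicit one--variable model near each marked point and then to realize the patched models by a single map that is genuinely holomorphic on $X$. First I would choose pairwise disjoint coordinate charts: near each $a_j$ a holomorphic coordinate $\zeta=w_j$ with $w_j(a_j)=0$ straightening $bX$ to a segment of the real axis, so that $X$ corresponds to an upper half--disc $\{\,\mathrm{Im}\,\zeta>0,\ |\zeta|<\rho\,\}$; and near each $b_l$ a chart in which the prescribed jet condition can be imposed. In the chart at $a_j$ set $\alpha_j=\theta_j/\pi\in(0,2)$ and, for a small parameter $\epsilon>0$, consider the explicit map
\[
	\varphi_{\epsilon}(\zeta)=\zeta\left(\frac{\zeta}{\zeta+i\epsilon}\right)^{\alpha_j-1},
\]
with the branch of the power that is holomorphic on the closed upper half--plane, where $\zeta+i\epsilon$ never vanishes. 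Near $0$ one has $\varphi_\epsilon(\zeta)=(i\epsilon)^{1-\alpha_j}\zeta^{\alpha_j}(1+O(\zeta))$, so $\varphi_\epsilon$ fixes the vertex, multiplies the opening angle by $\alpha_j$, and hence produces exactly a $\theta_j$--wedge in the sense of Definition \ref{def:wedge}; the non--integer exponent is precisely what forces the final map to be merely smooth on $\overline X\setminus\{a_1,\dots,a_m\}$. Using $|\zeta+i\epsilon|\ge|\zeta|$ on the closed half--plane one checks $|\varphi_\epsilon(\zeta)-\zeta|=O(\epsilon)$ uniformly, so that $\varphi_\epsilon\to\Id$ as $\epsilon\downarrow 0$.

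Next I would assemble these local models into one global map. Patching the $\varphi_{\epsilon}$'s to the identity by cut--off functions (equal to the models on small neighborhoods of the $a_j$ and to $\Id$ away from the charts), and interpolating the $2$--jet of $\Id$ at each $b_l$, yields a continuous map $F_\epsilon\colon\overline X\to Y$ that is $O(\epsilon)$--close to $\Id$, holomorphic near the $a_j$ and the $b_l$, but only smooth in the cut--off annuli. The decisive step is to correct $F_\epsilon$ to a map $\varphi_i$ that is honestly holomorphic on all of $X$: since the non--holomorphic part is concentrated in the transition annuli, I would approximate it, uniformly on $\overline X$, by a holomorphic correction obtained from a Mergelyan/Runge--type theorem on the bordered Riemann surface $\overline X$ (solving a $\dibar$--problem with the cut--off error as data), arranged to preserve the leading singular term $(i\epsilon)^{1-\alpha_j}\zeta^{\alpha_j}$ at each $a_j$ and the prescribed $2$--jet at each $b_l$. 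Because the wedge is determined by that leading term, the corrected map still carries a $\theta_j$--wedge at $a_j$ and still satisfies (3).

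Finally I would verify injectivity and the three conclusions. Away from the charts $\varphi_i$ is $C^0$--close to the injective map $\Id$, hence injective there by a standard perturbation (degree/Hurwitz) argument; inside each chart injectivity is inherited from the explicit model $\varphi_\epsilon$, which is a biholomorphism of the half--plane onto a sector; matching these gives global injectivity of $\varphi_i$ once $\epsilon$ is small. Letting $\epsilon=\epsilon_i\downarrow 0$ while the approximation errors tend to $0$ then yields (1), and (2)--(3) hold by construction.

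The main obstacle is the holomorphic gluing of the second paragraph: one must make the map honestly holomorphic across the transition annuli while simultaneously (i) retaining the prescribed branch--type singularity, and hence the exact opening angle, at each $a_j$, and (ii) not destroying global injectivity. The tension is intrinsic, since a genuine $\theta_j$--wedge requires a non--smooth (non--integer power) singularity at $a_j$, so the approximation cannot be by maps that are smooth at $a_j$; the singular factor must be built into $\varphi_i$ by hand and only the smooth remainder approximated, all while keeping the total perturbation uniformly small so that injectivity survives.
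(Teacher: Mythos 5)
Your architecture (explicit one-variable models at the $a_j$'s, cut-off patching, then an additive $\dibar$/Mergelyan correction with jet interpolation) is a genuinely different route from the paper's, which instead builds the wedge-shaped \emph{target domains} by hand in $Y$, parametrizes them by the Riemann mapping theorem (with Goluzin's theorem giving convergence to the identity), and glues \emph{compositionally} via a Cartan pair, $\psi_i\circ f_i=g_i$ on $A\cap B$, with the gluing maps tangent to the identity to second order at the marked points. In principle your scheme can be made to work, but as written it has genuine gaps, all concentrated exactly where the paper's detour through the Riemann mapping theorem earns its keep. First, your starting chart does not exist: a holomorphic coordinate near $a_j$ taking the merely $\cC^\infty$ arc $bX$ into the real axis exists only when $bX$ is real-analytic there. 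The standard substitute is a one-sided conformal chart $\kappa\colon \ol{X\cap U}\to \ol{\D}\cap\{\mathrm{Im}\,\zeta\ge 0\}$ (Riemann map, smooth up to the boundary by Kellogg--Warschawski), but then the transplanted map $\kappa^{-1}\circ\varphi_\epsilon\circ\kappa$ is only defined if $\varphi_\epsilon$ maps the half-disc into itself, which holds precisely when $\alpha_j\le 1$: a wedge of angle $\theta_j>\pi$ must protrude \emph{outside} $\ol X$ into $Y$, i.e.\ outside the range of any one-sided chart. Second, even granting a two-sided chart, for $\alpha_j>1$ your model does not produce a $\theta_j$-wedge in the sense of Definition \ref{def:wedge}: writing $\varphi_\epsilon(\zeta)=\zeta^{\alpha_j}(\zeta+i\epsilon)^{1-\alpha_j}$, the boundary arc $t\mapsto\varphi_\epsilon(t)$, reparametrized by $s=t^{\alpha_j}$, is $c\bigl(s+c_1s^{1+1/\alpha_j}+\cdots\bigr)$ with $c_1=(1-\alpha_j)/(i\epsilon)$, whose component perpendicular to the tangent is nonzero; hence the arc deviates from its tangent line like $s^{1+1/\alpha_j}\gg s^2$ and its curvature blows up like $s^{1/\alpha_j-1}$. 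Definition \ref{def:wedge} requires the boundary near the vertex to be the image of the sector's edges under a $\cC^2$ map with invertible differential, hence two $\cC^2$ arcs; so conclusion (2) fails for your maps whenever $\theta_j\in(\pi,2\pi)$, a range the lemma must cover.

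Third, the assertion that ``the wedge is determined by that leading term'' is false, and it hides the key quantitative requirement. If the holomorphic correction $u$ satisfies $u(a_j)=0$ but $u'(a_j)\neq 0$, then in the sector variable $v=\zeta^{\alpha_j}$ the contribution $u'(a_j)\,v^{1/\alpha_j}$ has unbounded second derivative as soon as $\alpha_j>1/2$, so it destroys the $\cC^2$-wedge structure even when the corner angle survives; moreover, near the vertex, where $|\varphi_\epsilon'(\zeta)|\sim|\zeta|^{\alpha_j-1}$ degenerates for $\alpha_j>1$, a correction that is merely uniformly small can destroy injectivity. What you need -- and can arrange, since it amounts to finitely many linear conditions on the Mergelyan/$\dibar$ step -- is that the correction vanish to \emph{second order} at each $a_j$ as well as at each $b_l$; this is precisely the role of the second-order tangency imposed on the maps $f_i,g_i$ in the paper's splitting step (Theorem 8.7.2 of \cite{F:book}). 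With that repair, and with your explicit model replaced by a map onto a genuine $\cC^2$ wedge (e.g.\ a conformal map onto a straight sector rounded off, which is in effect what the paper's Riemann-mapping step does), your approach goes through; as written, it does not prove the lemma for $\theta_j\ge\pi$ and does not justify preservation of (2) and of injectivity under the holomorphic correction.
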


\begin{proof}
The proof is similar to that of Lemma 8.8.3 in \cite[p.\ 366]{F:book},
and it will help the reader to consult Fig.\ 8.1 in \cite[p.\ 367]{F:book}.

 By enlarging the domain $X$ slightly away from the $a_j$'s we may assume 
 that $X$ is smoothly bounded. For simplicity of notation we explain 
 the proof in the case when there is only one such point $a=a_1$;
 the same procedure can be performed simultaneously 
 at finitely many points. %  by using the cited tools.  
 
 Choose a smoothly bounded disc $D$ in $Y$ such that $a\in bD$,
 $\ol D$ does not contain any of the points $b_j$, and  
 $\ol{U} \cap \ol X \setminus\{a\} \subset D$ holds for some 
 small open neighborhood $U$ of the point $a$ in $Y$. 
 (The disc $D$ is obtained by pushing the boundary of 
 $X$ slightly out near $a$ and then rounding off.) 
 We also choose a compact Cartan pair $(A,B)\subset Y$
 with $\overline X\subset (A\cup B)^\circ$ and 
 $C:=A\cap B\subset D$. (For the notion of a Cartan pair 
 see \cite[Def.\ 5.7.1]{F:book}.) The set $A$ is
 chosen such that it contains a neighborhood of $a$, 
 and $B$ contains $\ol X\setminus U'$ for a small neighborhood $U'\subset U$ 
 of the point $a$.  
 
 The Riemann Mapping Theorem furnishes a sequence of injective 
 continuous maps  $\psi_i \colon \overline D\rightarrow Y$ that are
 holomorphic in $D$ and smooth on $\ol D\setminus \{a\}$ such that 
 $\psi_i(a)=a$, $\psi_i(D)$ is a $\theta_1$-wedge with vertex $a$
 (see Def.\ \ref{def:wedge}), and the sets $\psi_i(\ol D)$ converge to 
 $\ol D$ as $i\to\infty$.  We may assume that $\psi_i\rightarrow\Id$
 uniformly on $\overline D$ (see Goluzin \cite[Theorem 2, p.\ 59]{Goluzin}).  
 This implies that $\psi_i(C)\subset D$ for all sufficiently large $i\in \N$.
 
 By Theorem 8.7.2 in \cite[p.\ 359]{F:book}  there exist an integer $i_0\in \N$ 
 and sequences of injective holomorphic maps $f_i \colon A\rightarrow Y$ and 
 $g_i\colon B\rightarrow Y$ $(i\ge i_0)$,  both converging to the identity map 
 and tangent to the identity to second order  at those points 
 $a$ and $b_j$ which are contained in their respective domains, such that 
 \[
 	\psi_i \circ f_i = g_i \ \ \text{holds on} \ C.  
 \]
 The sequence of maps $\varphi_i\colon \ol X\to Y$, defined by 
 \[
 	\varphi_i=\psi_i\circ f_i \ \text{on} \ A\cap\overline X,
 	\qquad \varphi_i=g_i \ \text{on}\ \overline X\cap B
 \] 
 then satisfies the conclusion of the lemma. Injectivity of $\varphi_i$ on $\ol X$
 for sufficiently large index $i$ can be seen exactly as in the proof
 of \cite[Lemma 8.8.3]{F:book} (see bottom of page 359 in the
 cited source).
\end{proof}

Using Lemma \ref{wedging} we obtain the 
following version of the main tool introduced in 
\cite{ForstnericWold} for exposing boundary points of bordered Riemann surfaces. 
(See also Theorem 8.9.10 and Fig.\ 8.2 in \cite[pp.\ 372--373]{F:book}.)
The main novelty here is that we create {\em exposed points with wedges}.

%
%
%  THEOREM: EXPOSING BOUNDARY POINTS
%
%
\begin{theorem}
\label{exposing}
Let $\ol X$ be a smoothly bounded domain in a Riemann surface $Y$,  
$f\colon \overline X\hookrightarrow\mathbb C^2$ a holomorphic embedding
with corners of the form (\ref{type}), and $a_1,\ldots,a_m \in bX$, 
$b_1,\ldots,b_k\in\overline X$ distinct points 
such that $f$ is smooth near the $a_j$'s.  
Let $\gamma_j \colon [0,1]\rightarrow\mathbb C^2$ $(j=1,\ldots,m)$
be smooth embedded arcs with pairwise disjoint images satisfying 
the following properties:
\begin{itemize}
\item $\gamma_j([0,1]) \cap f(\overline X)=\gamma_j(0)=f(a_j)$ for $j=1,\ldots,m$, and
% \item $\pi_1(\gamma_i(1))\neq\pi_1(\gamma_j(1))$ for $i\neq j$, and 
\item the image $E:=f(\overline X) \,\cup\, \cup_{j=1}^m \gamma_j([0,1])$ 
is $\pi_1$-exposed at the point $\gamma_j(1)$ for $j=1,\ldots,m$
(see (\ref{eq:exposed})).
\end{itemize}
Given an open set $V\subset\mathbb C^2$ containing % the union of arcs
$\cup_{j=1}^m \gamma_j([0,1])$, 
an open set $U\subset Y$ containing the points $a_j$ 
and satisfying $f(\ol {U\cap X})\subset V$, and numbers $0<\theta_j<2\pi$
$(j=1,\ldots,m)$ and $\epsilon>0$, there exists a holomorphic embedding 
with corners $g\colon \overline X\hookrightarrow\mathbb C^2$ 
of the form (\ref{type}) satisfying the following properties: 
\begin{itemize}
\item[\rm (1)] $\|g-f\|_{\overline X\setminus U}<\epsilon$, 
\item[\rm (2)] $g( \ol{U\cap X})\subset V$, 
\item[\rm (3)] $g(x)=f(x) + o(\dist(x,b_j)^2)$ as $x\to b_j$ $(j=1,\ldots,k)$, 
\item[\rm (4)] $g(a_j)=\gamma_j(1)$ and $g(\overline X)$ is 
$\pi_1$-exposed with a $\theta_j$-wedge at $g(a_j)$ for 
every $j=1,\ldots,m$, and
\item[\rm (5)] $g$ is smooth near all points $x\in bX\setminus \{a_1,\ldots,a_m\}$
at which $f$ is smooth.
\end{itemize}
\end{theorem}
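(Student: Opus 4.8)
The plan is to factor the desired embedding as $g=\Phi\circ f\circ\varphi$, where $\varphi\colon\ol X\to Y$ creates the prescribed $\theta_j$-wedges at the $a_j$ in the source (keeping $g$ of the form (\ref{type})), and $\Phi\in\Aut\C^2$ is a holomorphic automorphism that drags each $f(a_j)$ along $\gamma_j$ out to the exposed endpoint $\gamma_j(1)$. The decisive structural observation is that, since the arcs meet $f(\ol X)$ only at their initial points and $E$ is $\pi_1$-exposed at each $\gamma_j(1)$, the endpoint $\gamma_j(1)$ does not lie on $f(\ol X)$ and its $z_1$-coordinate $w_j:=\pi_1(\gamma_j(1))$ is omitted by $\pi_1(f(\ol X))$; indeed $\gamma_j(1)\neq\gamma_j(0)=f(a_j)$ forces $f(\ol X)\cap\pi_1^{-1}(w_j)=\emptyset$. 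This omission is what ultimately delivers condition (iii) of Definition \ref{def:exposed-wedge} for the final embedding. It suffices to treat a single point, since the $a_j$ have disjoint neighborhoods and all operations are localized there; for several points one performs the same steps simultaneously.

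First I would apply Lemma \ref{wedging} to $X\Subset Y$ with the marked points $a_1,\dots,a_m$ and $b_1,\dots,b_k$, obtaining injective maps $\varphi_i\to\Id$, holomorphic on $X$ and smooth off $\{a_j\}$, such that $\varphi_i(X)$ has a $\theta_j$-wedge at each $a_j$ and $\varphi_i$ is tangent to the identity to second order at the $b_j$. Composing with $f$ yields $f\circ\varphi_i$, an embedding with corners of the form (\ref{type}) whose image carries a genuine $\theta_j$-wedge at $f(a_j)$; for $i$ large this already secures properties (3) and (5) and the correct wedge angle, while remaining arbitrarily close to $f$ on $\ol X\setminus U$. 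Next I would straighten the arcs: using the Andersén--Lempert gluing technique that underlies Theorem 8.7.2 of \cite{F:book} (the same tool invoked in Lemma \ref{wedging}), I would produce automorphisms of $\C^2$, close to $\Id$ on $f(\ol X\setminus U)$ and fixing the $f(a_j)$ to high order, after which each $\gamma_j$ lies in a thin tube inside $V$ and, near its endpoint, is graphical over the $z_1$-axis.

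The heart of the argument is the exposing automorphism $\Phi$. Following the scheme of \cite{ForstnericWold}, I would realize a smooth isotopy from the identity that transports $f(a_j)$ to $\gamma_j(1)$ inside the tube around $\gamma_j$, and approximate it by $\Phi\in\Aut\C^2$ that is $\epsilon$-close to $\Id$ on $f(\ol X\setminus U)$ and confines $f(\ol{U\cap X})$ to $V$; this yields (1) and (2). Because $\Phi$ is a local biholomorphism at $f(a_j)$, it carries the $\theta_j$-wedge created above to a $\theta_j$-wedge with vertex $\gamma_j(1)$, and by arranging the normal form so that near $\gamma_j(1)$ the curve is a graph over a $\theta_j$-wedge $\Omega_j\subset\C$ with vertex $w_j$ in the $z_1$-plane, conditions (i)--(ii) of Definition \ref{def:exposed-wedge} follow. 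Condition (iii)---that no other part of $g(\ol X)$ lies over $\ol{\Omega_j}$---is exactly where the exposedness of $E$ enters: since $w_j\notin\pi_1(f(\ol X))$ and $\Phi$ is close to $\Id$ away from $U$, the only sheet of $g(\ol X)$ over a sufficiently thin wedge $\ol{\Omega_j}$ at $w_j$ is the exposed piece $g(\ol{U\cap X})$. Setting $g:=\Phi\circ f\circ\varphi_i$ for $i$ large then gives an embedding with corners of the form (\ref{type}) satisfying (1)--(5).

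I expect the main obstacle to be the simultaneous control required in the exposing step: one must choose a single $\Phi$ dragging every $f(a_j)$ out to the respective $\gamma_j(1)$ while (a) staying uniformly close to $\Id$ on $\ol X\setminus U$, (b) keeping $g(\ol{U\cap X})$ inside $V$, (c) producing precisely the graph-over-a-wedge normal form at each $\gamma_j(1)$ demanded by Definition \ref{def:exposed-wedge}, and (d) preserving global injectivity of $g$ on all of $\ol X$ together with the second-order tangency at the $b_j$. Balancing the local wedge-creation against the global exposedness---ensuring that thinning $\Omega_j$ around $w_j$ removes all extraneous sheets without spoiling the closeness estimates---is the delicate point, and is the reason the construction must proceed in the stated order: wedge, then straighten, then expose.
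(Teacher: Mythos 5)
Your opening step (Lemma \ref{wedging} in the source to create the $\theta_j$-wedges, which secures (3), (5) and the correct angles) agrees with the paper, and your observation that $w_j=\pi_1(\gamma_j(1))$ is omitted by $\pi_1(f(\ol X))$ is correct. The genuine gap is the exposing step: you posit an automorphism $\Phi\in\Aut\C^2$, obtained by Anders\'en--Lempert approximation of ``a smooth isotopy that transports $f(a_j)$ to $\gamma_j(1)$ inside the tube,'' while staying $\epsilon$-close to $\Id$ on $f(\ol X\setminus U)$. No tool you invoke produces such a $\Phi$. Anders\'en--Lempert theory does not approximate smooth isotopies; it approximates isotopies of \emph{injective holomorphic} maps defined on \emph{open} subsets of $\C^2$ with Runge images, or else it moves a compact polynomially convex set while approximating the identity on a \emph{disjoint} polynomially convex set (this is exactly how Lemma \ref{remove} uses it). Neither variant applies here: the piece $f(\ol{U\cap X})$ you want to drag is attached to the piece $f(\ol X\setminus U)$ you want to freeze, so the disjoint-sets technique is unavailable; and the isotopy version would require a holomorphic injective isotopy on a two-dimensional neighborhood of $f(\ol X)$ in $\C^2$, equal to the identity near $f(\ol X\setminus U)$ and stretching a neighborhood of $f(a_j)$ along the tube, with all images Runge. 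Constructing that isotopy \emph{is} the problem (its time-one map would already be the exposing map), and the needed convexity hypotheses are absent: a bordered curve $f(\ol X)\subset\C^2$ need not be polynomially convex, and unlike Lemma \ref{remove}---where polynomial convexity is painstakingly established via properness of $\pi_2$ and Stolzenberg's theorem---Theorem \ref{exposing} offers no hypotheses from which to derive it. Your appeal to ``the scheme of \cite{ForstnericWold}'' is also misplaced: Theorem 4.2 there does not expose points with ambient automorphisms.

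The paper instead performs the dragging entirely on the source side, where it is a one-variable problem. Writing $f=\wt f\circ\varphi$ with $\wt f\colon Y\hra\C^2$ an embedding and $X'=\varphi(X)$, one enlarges $\ol{X'}$ slightly to $\ol Z$, attaches short arcs $\Gamma_j\subset Y$ to $\ol Z$ at the points $a_j'=\varphi(a_j)$, uses Mergelyan's theorem to modify the \emph{ambient embedding} $\wt f$ so that it maps $\Gamma_j$ approximately onto $\gamma_j$ with the far endpoint going exactly to $\gamma_j(1)$, and then uses the Riemann Mapping Theorem to stretch small discs $D_j\subset Y$ at $a_j'$ over the arcs $\Gamma_j$, gluing these stretchings to an approximate identity by the Cartan-pair theorem (Theorem 8.7.2 in \cite{F:book}; note this is a gluing statement for injective holomorphic maps on Riemann surfaces, not an Anders\'en--Lempert result as your straightening step suggests). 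This yields an injective holomorphic map $h$ on a neighborhood of $\ol Z$ in $Y$, and the exposed embedding is $g=(\wt f\circ h)\circ\wt\varphi$, automatically of the form (\ref{type}); no automorphism of $\C^2$, hence no polynomial convexity, is needed anywhere. To repair your argument you would either have to construct the holomorphic dragging isotopy on a neighborhood of $f(\ol X)$---which this route deliberately avoids---or switch to the source-side composition.
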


If for some $j\in \{1,\ldots,k\}$ we have that 
$b_j\in bX$ and $f(X)$ is a wedge at the point $f(b_j)$,
then property (3) insures that $g(X)$ remains a wedge with the same angle
at $f(b_j)=g(b_j)$. In addition, property (4) insures 
that $g(X)$ is an exposed wedge at each of the points $g(a_j)$.

\begin{proof}
Since $f$ is of the form (\ref{type}), we write $f=\wt f\circ\varphi$
where $\wt f\colon Y\hra\C^2$ is a holomorphic embedding. Set $X'=\varphi(X)\Subset Y$.
Lemma \ref{wedging}, applied to the domain $X'$ and the 
points $a_j'= \varphi(a_j) \in bX'$, $b'_j=\varphi(b_j)\in \ol{X'}$,
gives an injective continuous map $ \psi\colon \overline{X'} \rightarrow Y$
close to the identity map, with $ \psi$ holomorphic on $X'$ 
and smooth on $\overline X'\setminus\{a'_1,\ldots,a'_m\}$,
such that 
\begin{itemize}
\item[\rm (2')] $ \psi(a'_j)=a'_j$ and $ \psi(X')$ is a $\theta_j$-wedge 
with vertex  $a'_j$ ($j=1,\ldots,m$), and
\item[\rm (3')] $ \psi(x)=b'_j + o(\dist(x,b'_j)^2)$ as $x\rightarrow b_j$ $(j=1,\ldots,k)$.
\end{itemize}
(The map $\psi$ is one of the maps $\varphi_i$ in Lemma \ref{wedging}, 
and the properties (2'), (3') correspond to (2), (3) in that lemma, respectively.)

Set $\wt \varphi=\psi\circ\varphi \colon \ol X\to Y$; this is an embedding with 
the analogous properties as $\varphi$, but with additional $\theta_j$-wedges at 
the points $a'_j\in bX'$. 
The embedding with corners $\wt f\circ \wt \varphi\colon \ol X\hra  \C^2$ 
then satisfies properties (1)--(3) and (5) (for the map $g$) in Theorem \ref{exposing}.

In order to achieve also condition (4) we apply Theorem 8.9.10 in \cite{F:book}
and the proof thereof. 
(The original source for this result is \cite[Theorem 4.2]{ForstnericWold}.)
We recall the main idea and refer to the cited works for the details. 
By pushing the boundary $bX'$ slightly outward away from the $a'_j$'s 
we obtain a smoothly bounded domain $Z \Subset Y$ such that 
$\ol X' \subset Z \cup\{a'_1,\ldots,a'_m\}$.
We attach to $\ol Z$ short pairwise disjoint 
embedded arcs $\Gamma_j \subset Y$ intersecting $\overline Z$ only 
at the points $a'_j$.  By Mergelyan's theorem we can change the embedding 
$\wt f$ so that it maps the arc $\Gamma_j$ approximately onto the 
arc $\gamma_j$ for each $j=1,\ldots,m$, taking the other endpoint $c_j$
of $\Gamma_j$ to the exposed endpoint $\gamma_j(1)\in\C^2$ 
of $\gamma_j$ and remaining close to the initial embedding on $\ol Z$.  
At each point $a'_j \in bZ$ we choose a small smoothly bounded disc 
$D_j\subset Y$ with the same properties as in the proof of Lemma \ref{wedging};
in particular, $a'_j \in bD_j$ and $D_j$ contains $\ol Z\setminus \{a'_j\}$
near the point $a'_j$. By the Riemann Mapping Theorem 
we find for each $j\in\{1,\ldots,m\}$ a holomorphic map 
$h_j\colon \overline D_j\rightarrow Y$ stretching $\overline D_j$ to contain 
the arc $\Gamma_j$, mapping $a'_j$ to the other endpoint 
$c_j$ of $\Gamma_j$ and remaining close to the 
identity except very near the point $a'_j$.  
We then glue the $h_j$'s to an approximation of the identity 
map on the rest of the domain $\overline Z$, using again 
Theorem 8.7.2 in \cite[p.\ 359]{F:book}.
This gives an injective holomorphic map $h\colon \wt Y \hra Y$ 
in an open neighborhood $\wt Y$ of $\ol Z$ such that $h|_{\ol Z}$ 
is close to the identity, except very near the points $a'_j\in bZ$. 
The holomorphic embedding $\wt g:=\wt f\circ h\colon \wt Y \hra \C^2$ 
is then close to $\wt f$ on $\ol Z$, except near the points $a'_j$. 
By the construction, $\wt g(a'_j)$ is a $\pi_1$-exposed point 
of $\wt g(\ol Z)$ for $j=1,\ldots,m$. 
The embedding with corners $g=\wt g \circ \wt \varphi\colon \ol X\hra  \C^2$ 
is then of the form (\ref{type}) and satisfies 
properties (1)--(5) in Theorem \ref{exposing}.
% provided that our approximations were close enough.
\end{proof}

%%%%%%%%%%%%%%%%%%%%%%%%%%%%%%%%%%%%%
%																		%
%   SECTION 3: THE MAIN LEMMA       %
%																		%
%%%%%%%%%%%%%%%%%%%%%%%%%%%%%%%%%%%%%
%
\section{The main lemma}
\label{sec:keylemma}
In this section we prove the following key lemma 
that will be used in the proof of Theorem \ref{main}. 
It is similar in spirit to Lemma 1 in \cite[p.\ 4]{Wold}
(see also \cite[Lemma 4.14.4., p.\ 150]{F:book}),
but with improvements needed to deal with the more 
complicated situation at hand.

%
%
%  THE MAIN LEMMA 
%
%
\begin{lemma}
\label{remove}
Let $\Omega= \P^1 \setminus\cup_{j=0}^\infty \ol\triangle_j$ be a circled domain,
% obtained by removing closed discs $\overline\triangle_j$ from $\P^1$, 
and let $\Omega'= \P^1 \setminus\cup_{j=0}^k \ol\triangle_j$ for some $k\in\N$.
Pick a point $c_j\in b\triangle_j$ for $j=0,1,\ldots,k$.
Assume that $f\colon \overline{\Omega'} \hra \mathbb C^2$ is
a holomorphic embedding with a $\pi_1$-exposed $\theta_j$-wedge 
at each point $f(c_j)$ and $\theta_0+\cdot\cdot\cdot+\theta_k<2\pi$.
Let $g$ be a rational shear map of the form 
\[
	g(z_1,z_2)=\left( z_1,z_2 + \sum_{j=0}^k\frac{\beta_j}{z_1-\pi_1(f(c_j))} \right).
\]
Assume that there exist open neighborhoods $U_j\subset \P^1$ of 
the points $c_j$ such that $(\pi_2\circ g\circ f)(U_j)\subset \P^1$ are 
$\theta_j$-wedges whose closures only intersect at their common vertex 
$\infty\in \P^1$. (This can be arranged by a suitable choice of the 
arguments of the numbers $\beta_j$, while at the same time 
keeping $|\beta_j|>0$ arbitrary small.)
%  (see Def.\ \ref{def:wedge}). 
Given a compact polynomially convex set $K\subset\mathbb C^2$ with 
\[
	K\cap (g\circ f) (b\Omega' \cup 
	(\cup_{i=k+1}^\infty\overline\triangle_i)) =\emptyset
\]
and numbers $N\in\mathbb N$ and $\epsilon>0$, 
there exists a $\psi\in\Aut\mathbb C^2$ such that 
\begin{itemize}
\item[(1)] 
$(\psi\circ g\circ f)(b\Omega'\cup (\cup_{i=k+1}^\infty \ol\triangle_i))
\subset \mathbb C^2\setminus \ol{\mathbb B}_N$, and 
\item[(2)] $\|\psi - \Id\|_K<\epsilon$. 
\end{itemize}
\end{lemma}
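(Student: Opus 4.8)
The plan is to realize $\psi$ as a limit of a suitable composition of automorphisms that pushes the relevant boundary set outside the ball $\ol{\B}_N$ while holding $K$ essentially fixed. The underlying mechanism is the now-standard Anders\'en--Lempert / Forstneri\v{c}--Rosay approximation philosophy: a holomorphic map isotopic to the identity through automorphisms of a Runge-type domain can be approximated uniformly on polynomially convex sets by genuine holomorphic automorphisms of $\C^2$. So the first step is to understand the geometry of the set $S:=(g\circ f)(b\Omega'\cup(\cup_{i=k+1}^\infty \ol\triangle_i))$. The rational shear $g$ has poles exactly over the exposed vertices $\pi_1(f(c_j))$, and the hypothesis on the arguments of the $\beta_j$ guarantees that near each $c_j$ the image under $\pi_2\circ g\circ f$ is a $\theta_j$-wedge with vertex at $\infty$, with these wedges meeting only at $\infty$. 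Since $\sum_j \theta_j<2\pi$, the union of the (closed) wedges does not fill a full neighborhood of $\infty$ in the $z_2$-line, leaving an open sector of directions free. This is precisely what will allow a single global vector field / automorphism to carry the whole tail of $S$ off to infinity through that free sector.

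The key point is that $S$ escapes to infinity in a controlled, ``thin'' manner: away from the vertices the set $(g\circ f)(b\Omega')$ is a compact piece of the embedded curve disjoint from $K$, while near the vertices it asymptotically lies inside the prescribed disjoint wedges in the $z_2$-direction. Thus after first applying a preliminary shear or translation I would arrange that $S\cup K$ is polynomially convex and that $S$ lies in a ``nice'' position relative to a complete polynomial flow. Concretely, I would choose a polynomial (or entire) shear vector field of the form $V=p(z_1)\,\partial/\partial z_2$, or more generally a field adapted to the free sector of directions, whose time-$t$ flow fixes $K$ to within $\epsilon$ (because $K$ sits in the complement of the poles and of the wedge directions) yet pushes points lying in the wedges monotonically toward $\infty$. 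Iterating the flow, or composing finitely many such shears in the two coordinate directions, drives the compact part of $S$ and each infinite tail beyond radius $N$.

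The main obstacle, and the step requiring the most care, is the \emph{simultaneous} control of the compact part $b\Omega'$ and the infinite tail $\cup_{i>k}\ol\triangle_i$ of $S$ together with the fixed set $K$: one must produce a single automorphism (or finite composition) moving \emph{all} of $S$ out of $\ol{\B}_N$ while moving \emph{none} of $K$ by more than $\epsilon$. The hypothesis that the images of the $U_j$ under $\pi_2\circ g\circ f$ are disjoint $\theta_j$-wedges sharing only the vertex $\infty$, with total angle below $2\pi$, is exactly the combinatorial/geometric input that makes this possible: it certifies that $\wh{S\cup K}$ does not swallow extra components and that there is a Runge pair to which Anders\'en--Lempert applies. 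I would therefore verify polynomial convexity of the union at each stage (using that $K$ is polynomially convex and disjoint from $S$, plus a separation argument exploiting the free angular sector), construct the isotopy of $S$ to a set outside $\ol{\B}_N$ by a flow supported away from $K$, and then invoke the approximation theorem for automorphisms isotopic to the identity to replace the isotopy by $\psi\in\Aut\,\C^2$ satisfying (1) and (2). The delicate analytic estimate is ensuring the tail of infinitely many discs $\ol{\triangle_i}$ is uniformly pushed out despite accumulating at the vertices; this is where the precise wedge geometry, rather than a soft argument, is indispensable.
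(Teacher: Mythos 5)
Your proposal assembles the right general toolbox (Anders\'en--Lempert approximation, polynomial convexity bookkeeping, shears adapted to the wedge geometry at infinity), but it misses the two points where the real work lies, and one of its concrete devices would fail.

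First, the discs do not accumulate only ``at the vertices''. The images $\ol\triangle'_i=(g\circ f)(\ol\triangle_i)$, $i>k$, cluster along the \emph{entire} boundary curves $\gamma=(g\circ f)(b\Omega')$, so infinitely many of them meet $\ol\B_N$ whenever $\gamma$ does. Hence the portion of your set $S$ that must be evacuated from the ball is a compact set with infinitely many connected components, and no flow ``supported away from $K$'', nor any isotopy-plus-approximation argument, can even be formulated until these components are organized into finitely many movable pieces. This is exactly what Lemma~\ref{division} and Remark~\ref{rem:closed} provide in the paper's proof: after a first automorphism (from \cite[Lemma 1]{Wold}) removes $\gamma$ from $\ol\B_N$, the discs whose images still meet $\ol\B_N$ form a \emph{compact} union of complementary components, which is then enclosed in finitely many smoothly bounded discs $D_1,\ldots,D_m\subset\Omega'$ whose closures avoid $b\Omega'$ and $(g\circ f)^{-1}(K)$; only this finite union of disc clusters is moved out by the isotopy/Anders\'en--Lempert step, relative to a polynomially convex compact set containing $K$. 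Your proposal has no counterpart of this clustering step; relatedly, ``arranging $S\cup K$ polynomially convex'' is not a usable notion here, since $S$ is unbounded and the approximation theorem operates on compact sets.

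Second, the control at infinity is missing, and your shear points the wrong way. Approximation of an isotopy on a compact set says nothing about the unbounded tails of $\gamma$ and of the disc family, which such an automorphism could push back into $\ol\B_N$; this is why the paper's $\psi=\phi_1\circ\phi_2$ has a second stage consisting of shears of the form $(z_1,z_2)\mapsto(z_1+h(z_2),z_2)$. The mechanism making these work is that $\pi_2$ is \emph{injective} on the tail $\ol X\setminus\pi_2^{-1}(D)$, mapping it onto pairwise disjoint closed wedges, so tail pieces can be separated by entire functions of $z_2$ built by one-variable Runge-type approximation (again \cite[Lemma 1]{Wold}). Your field $p(z_1)\,\partial/\partial z_2$ shears in the opposite direction and cannot do this: near infinity every point of $S$ has $z_1$ close to one of the finitely many pole values $\pi_1(f(c_j))$, so a function of $z_1$ cannot distinguish the discs from the curves, from each other, or from $K$ (which may meet the lines $z_1=\pi_1(f(c_j))$), and its flow cannot be small on $K$ yet large on the tail. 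Finally, the ``free sector'' left by $\sum_j\theta_j<2\pi$ is not the operative mechanism; that angle condition merely allows the arguments of the $\beta_j$ to be chosen so that the closed wedges are disjoint except at $\infty$, and it is this disjointness --- hence the injectivity of $\pi_2$ on the tail --- that the proof exploits.
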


\begin{proof}
We may assume that $\triangle_0= \P^1\setminus \ol\D$, so 
$\Omega=\D\setminus \cup_{j=1}^\infty \ol\triangle_j$.
By increasing the number $N\in\N$ we may also assume that $K\subset\mathbb B_N$.

Set $X=(g\circ f)(\Omega')$, 
$\gamma_j=(g\circ f)(b\triangle_j\setminus \{c_j\})$ 
$(j=0,\ldots,k)$, and $\gamma=\cup_{j=0}^k \gamma_j$. 
Then $\ol X$ is an embedded bordered Riemann surface in $\C^2$
whose boundary $bX=\gamma$ consists of pairwise 
disjoint properly embedded real curves $\gamma_j$ diffeomorphic to $\R$,
and the second coordinate projection $\pi_2\colon\ol X\to\C$ is proper.
Let $\triangle'_i=(g\circ f)(\triangle_i) \subset X$ for $i=k+1,k+2,\ldots$;
then 
\[
	X\setminus \cup_{i=k+1}^\infty \ol \triangle'_i = (g\circ f)(\Omega).
\]

To prove the lemma we must find an automorphism 
$\psi\in\Aut\C^2$ sending the boundary curves $bX=\gamma$ and all the
discs $\ol\triangle'_i$ for $i>k$ out of the ball $\ol \B_N$, while at the
same time approximating the identity map on the compact set $K$. 
We seek $\psi$ of the form 
\[
	\psi=\phi_1\circ \phi_2, \quad \text{where}\ \phi_1,\phi_2\in\Aut\C^2.
\]
We begin by constructing $\phi_1$.

The conditions on $f$ and $g$ insure that for any sufficiently large disc 
$D\subset \C$ centered at the origin the projection 
$\pi_2\colon \ol X\setminus \pi_2^{-1}(D) \to \C\setminus D$ is 
injective and maps $\ol X\setminus \pi_2^{-1}(D)$ onto the union 
of $k+1$ pairwise disjoint wedges with the common vertex at $\infty$;
furthermore, the closed set
\begin{equation}
\label{eq:Dandcurves}
	\overline D \cup \pi_2\left(
	\gamma \,\cup\, \cup_{i=k+1}^\infty \overline\triangle'_i \right) \subset\C
\end{equation}
can be exhausted by polynomially convex compact sets. 
To see this, note that if $V_j'\subset V_j$ are small round discs in 
$\C$ centered at the point $c_j$ such that $V_j\subset U_j$
for $j=0,1,\ldots,k$, where the $U_j$'s satisfy the hypotheses 
of the lemma, then the sets 
\[
		(bV_j\setminus \triangle_j)
 		\cup \bigl( b\triangle_j\cap (\ol V_j\setminus V_j') \bigr) 
		\cup \left( \cup_{i=k+1}^\infty \ol\triangle_i\cap (\ol V_j\setminus V_j') \right) 
		\subset \C
\]
are polynomially convex, and the map 
$
	\pi_2\circ g\circ f \colon \cup_{j=0}^k \ol V_j\cap\Omega'\rightarrow\mathbb C
$
is an injection onto a union of wedges such that the closures of any two 
of them intersect only at their common vertex at $\infty$. 
An exhaustion of the set (\ref{eq:Dandcurves}) by polynomially 
convex compact sets is constructed by letting the radii of 
the discs $V'_j$ going to $0$.

Let $J=\{i\in\N\colon i \geq k+1,\ \pi_2(\overline\triangle'_i)\cap\overline D\neq\emptyset\}$. 
Consider the compact set
\[
 	C := \left[\gamma\cap\pi_2^{-1}(\overline D)\right]
 	\cup \left[\cup_{i\in J} \overline\triangle'_i \right] \subset \ol X.
\]
(Fig.\ \ref{fig:C} below shows $C$ with bold lines
and black discs.) We claim that $C$ is polynomially convex.  
Clearly $C$ is holomorphically convex in $\ol X$ since its complement
is connected. Furthermore, $\ol X$ can be exhausted by 
compact smoothly bounded subdomains $X_j\subset \ol X$ such that each 
boundary component of $X_j$ intersects the boundary of $X$.
(It suffices to take the intersection of $\ol X$ with a sufficiently 
large ball and smoothen the corners.) 
Then $\widehat X_j\setminus X_j$ is either
empty or a pure one-dimensional complex subvariety of $\mathbb C^2\setminus X_j$
(see Stolzenberg \cite{Stolz}), the latter 
being impossible since the variety would have to be unbounded.  
Hence every such set $X_j$ is polynomially convex, and by choosing 
it large enough to contain $C$ we see that $C$ is polynomially convex.

We will construct $\phi_1$ as a composition $\phi_1=\sigma_2\circ\sigma_1 \in\Aut \C^2$ 
which is close to the identity on $K$ and satisfies 
$\phi_1(C)\subset\C^2\setminus\overline \B_N$;
equivalently, $C\cap \phi_1^{-1}(\ol \B_N)=\emptyset$.

%%%%%%%%%%%%%%%%%%%%%%%%%%%%%%%
%															%
%  Figure:   The set C        %
%                             %
%%%%%%%%%%%%%%%%%%%%%%%%%%%%%%%
%
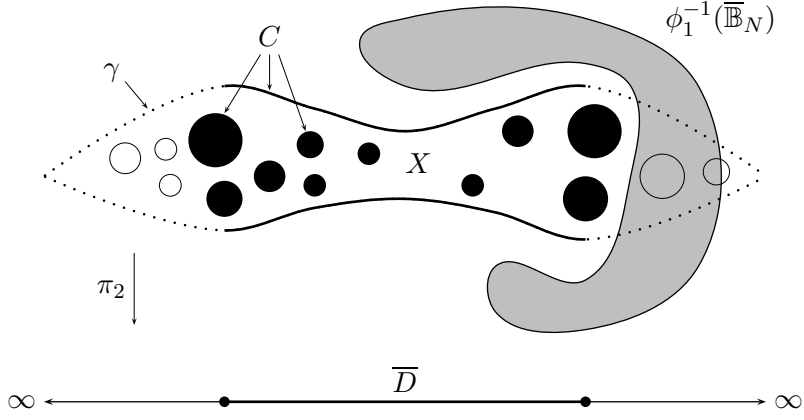
\begin{figure}[ht]
\psset{unit=0.6cm, linewidth=0.5pt}  
\begin{pspicture}(-9,-1)(9,8.5)

%
%  The shaded set $\phi_1^{-1}(\ol{B}_N)
%
\pscustom[fillstyle=solid,fillcolor=lightgray]
{ \psecurve(0,6)(-1,7)(0,7.5)(4,7.5)(6.5,6)(6.5,2)(4,0.6)(2,1)
(2,2)(4,1.7)(5,4)(5,6)(4,6.5)(0,6)(-1,7)(0,7.5)
}

\rput(7,7.5){$\phi_1^{-1}(\ol \B_N)$}

%
%  The z_2 axis \P^1
%
\psline[linewidth=0.5pt, arrows=<->](-8,-1)(8,-1)
\rput(-8.5,-1){$\infty$}
\rput(8.5,-1){$\infty$}
\psline[linewidth=1pt, arrows=*-*](-4,-1)(4,-1)
\rput(0,-0.5){$\ol D$}

\psline[linewidth=0.2pt, arrows=->](-6,2.3)(-6,0.7)
\rput(-6.5,1.5){$\pi_2$}

%
%  The curve \gamma, dotted
%
\pscurve[linestyle=dotted,linewidth=1pt](-8,4)(-4,6)(-2,5.5)(0,5)(2,5.5)(4,6)(8,4)
\pscurve[linestyle=dotted,linewidth=1pt](-8,4)(-4,2.8)(-2,3.3)(0,3.5)(2,3.2)(4,2.6)(8,4)
%
%  The part of \gamma over \ol D 
%
\psecurve[linewidth=1pt](-8,4)(-4,6)(-2,5.5)(0,5)(2,5.5)(4,6)(8,4)
\psecurve[linewidth=1pt](-8,4)(-4,2.8)(-2,3.3)(0,3.5)(2,3.2)(4,2.6)(8,4)
\rput(-6.5,6.3){$\gamma$}
\psline[linewidth=0.2pt,arrows=->](-6.2,6.1)(-5.7,5.5)
\rput(0.3,4.3){$X$}

%
% black discs
%
\pscircle*(-4.2,4.8){0.6}
\pscircle*(4.2,5){0.6}
\pscircle*(-0.8,4.5){0.25}
\pscircle*(-2,3.8){0.25}
\pscircle*(-2.1,4.7){0.3}
\pscircle*(2.5,5){0.35}
\pscircle*(1.5,3.8){0.25}
\pscircle*(4,3.5){0.5}
\pscircle*(-3,4){0.35}
\pscircle*(-4,3.5){0.4}

%
% white discs
%
\pscircle[linewidth=0.3pt](5.7,4){0.5}
\pscircle[linewidth=0.3pt](6.9,4.1){0.3}
\pscircle[linewidth=0.3pt](-6.2,4.4){0.35}
\pscircle[linewidth=0.3pt](-5.2,3.8){0.25}
\pscircle[linewidth=0.3pt](-5.3,4.6){0.25}

\rput(-3,7.1){$C$}
\psline[linewidth=0.2pt,arrows=->](-3.2,6.7)(-4,5.4)
\psline[linewidth=0.2pt,arrows=->](-3,6.7)(-3,5.9)
\psline[linewidth=0.2pt,arrows=->](-2.8,6.7)(-2.2,5)

\end{pspicture}
\caption{The set $C$.}
\label{fig:C}
\end{figure}

By \cite[Lemma 1]{Wold} (see also \cite[Corollary 4.14.5]{F:book})
there exists $\sigma_1\in\Aut\mathbb C^2$ which is close to the identity on $K$ 
and satisfies $\sigma_1(\gamma) \subset \mathbb C^2\setminus \overline{\mathbb B}_N$.

Let $K'$ be the union of all discs $\overline\triangle_i$ $(i\in J)$ 
whose images $\overline\triangle'_i$ %  = (g\circ f)(\ol \triangle_i) \subset X$ 
satisfy 
\[
	 \sigma_1(\overline\triangle'_i)\cap\overline{\mathbb B}_N\neq\emptyset.
\]
Since $\sigma_1(\gamma)\cap {\ol \B}_N=\emptyset$,
the set $(\sigma_1\circ g\circ f)^{-1}({\ol\B}_N) \subset \Omega'$ is  
compact, and hence $K'$ is also compact (see Remark \ref{rem:closed}). 
Lemma \ref{division} gives pairwise disjoint smoothly bounded discs 
$D_1,\ldots,D_m$ in $\Omega'$ whose union 
$\cup_{j=1}^m D_j$ contains $K'$ and whose closures 
$\ol D_j$ avoid  $b\Omega' \cup (g\circ f)^{-1}(K)$.  
Set $D'_j=(g\circ f)(D_j) \subset X$ for $j=1,\ldots,m$. 
The set 
\[
		L := K\cup (C \setminus \cup_{j=1}^m \ol D'_j) \subset \C^2 
\] 
is then polynomially convex (argue as above for the set $C$, using the 
fact that $K$ is disjoint from $C$). 
The union of discs $E_0:=\cup_{j=1}^m \sigma_1(\overline D'_j)$ 
is polynomially convex and disjoint from $ \sigma_1(L)$, so it
can be moved out of the ball ${\ol\B}_N$ by a holomorphic isotopy 
in the complement of the polynomially convex set $\sigma_1(L)$.
(It suffices to first contract each disc $\sigma_1(\overline D'_j)$
into a small ball around one of its points and then move 
these small balls out of the set $\sigma_1(L)$ 
along pairwise disjoint arcs.) Furthermore, letting $E_t \subset \C^2$ $(t\in [0,1])$ 
denote the trace of $E_0$ under this isotopy, we can insure that 
for every $t$ the union $E_t\cup \sigma_1(L)$ is polynomially convex.
The Anders\'en-Lempert theory 
(cf.\ \cite[Theorem 4.12.1]{F:book}) now furnishes   
an automorphism  $\sigma_2\in\Aut\mathbb C^2$
which is close to the identity on the set $\sigma_1(L)$ and satisfies 
\[
	(\sigma_2\circ\sigma_1)(\cup_{j=1}^m \overline D'_j)\subset\C^2\setminus\ol{\B}_N.
\]
The automorphism $\phi_1=\sigma_2\circ\sigma_1 \in\Aut\C^2$ is then close to the 
identity map on $K$ and satisfies $\phi_1(C)\subset\C^2\setminus\overline{\B}_N$.

Next we shall find a shear automorphism $\phi_2\in\Aut\C^2$ of the form
\begin{equation}
\label{eq:shear}
	\phi_2(z_1,z_2)= \bigl(z_1 + h(z_2),z_2\bigr)
\end{equation}
which is close to the identity on $\C\times (\pi_2(C) \cup \overline D)$
and satisfies  
\[
	\phi_2\bigl( \gamma\cup (\cup_{i=k+1}^\infty\overline\triangle'_i) \bigr)
	\cap\phi_1^{-1}(\overline{\mathbb B}_N)=\emptyset.
\]
The automorphism $\psi=\phi_1\circ\phi_2\in \Aut\C^2$
will then satisfy Lemma \ref{remove}. 

Choose a large number $R>0$ such that 
\[
	\pi_1(\phi_1^{-1}(\overline{\mathbb B}_N)) \subset \D_R \quad
	\hbox{and} \quad
	\pi_2(\phi_1^{-1}(\overline{\mathbb B}_N))\cup \ol D \subset \D_R.
\]
We shall find $\phi_2$ as a composition $\phi_2=\tau_2\circ \tau_1$
of two shears of the same type (\ref{eq:shear}).
The values of the function $h\in\cO(\C)$ in (\ref{eq:shear}) 
on $\C\setminus \D_R$ are unimportant since $\phi_1^{-1}({\ol \B}_N)$ 
projects into $\D_R$. 

Recall that the projection 
$\pi_2\colon \ol X\setminus \pi_2^{-1}(D) \to \C\setminus D$ maps 
$\ol X\setminus \pi_2^{-1}(D)$ bijectively onto a union of 
pairwise disjoint closed wedges with the common vertex at $\infty$
(see Fig.\ \ref{Fig:wedges} below.) 
Hence the geometry of subsets of $\ol X\setminus \pi_2^{-1}(D)$ is the
same as the geometry of their $\pi_2$-projections in $\C\setminus D$,
an observation that will be tacitly used in the sequel.

By \cite[Lemma 1]{Wold} there is an entire function 
$h_1\in \cO(\C)$ which is small on the set $\overline D\cup\pi_2(C)$ and 
takes suitable values on the projected curves $\pi_2(\gamma) \setminus D$ 
so that the shear $\tau_1(z_1,z_2)= (z_1 + h_1(z_2),z_2)$ satisfies  
\[
	\tau_1(\gamma\cup C) \cap \phi_1^{-1}(\overline{\mathbb B}_N)=\emptyset.
\]
Set 
$
	\wt J=\{i\in\N\colon i \geq k+1,\ 
	\pi_2(\overline\triangle'_i)\cap {\overline \D}_R \neq\emptyset\}.
$
Consider the compact set
\[
 	\wt C := \left[\gamma\cap\pi_2^{-1}({\overline \D}_R) \right]
 	\cup \left[\cup_{i\in \wt J} \overline\triangle'_i \right] \subset \ol X.
\]
Let $K''$ be the union of all discs $\overline\triangle_i$
$(i\in \wt J)$ whose images $\overline\triangle'_i=(g\circ f)(\overline\triangle_i)$
satisfy the condition
\[
	\tau_1(\overline\triangle'_i) \cap \phi_1^{-1}(\overline{\mathbb B}_N)
	\ne \emptyset.  
\]
Our choices of $\phi_1$ and $\tau_1$ imply that for every disc
$\overline\triangle_i \subset K''$ the projection 
$\pi_2(\overline\triangle'_i)$ intersects the disc ${\ol\D}_R$ and avoids
the set $\pi_2(C)  \cup \ol D$.
Remark \ref{rem:closed} shows that $K''$ is compact.
Using Lemma \ref{division} we find smoothly bounded discs
$B_1,\ldots, B_l\subset \Omega'$ with pairwise disjoint closures 
whose union $\cup_{j=1}^l B_j$ contains $K''$ and is 
disjoint from $b\Omega' \cup (g\circ f)^{-1}(C)$,
and whose boundaries $bB_j$ belong to $\Omega$.
(Hence every disc $\ol\triangle_i$ for $i>k$ is either completely contained 
in $\cup_{j=1}^l \ol{B}_j$ or else is disjoint from it.) 
It follows that the set
\[
	\wt L:= \cup_{j=1}^l (\pi_2\circ g\circ f)(\ol B_j) \subset\C
\]
is a disjoint union of discs contained in $\C\setminus (\ol D\cup \pi_2(\gamma))$.
Hence  the sets $\wt L$ and $\pi_2(\wt C) \setminus \wt L$ are 
polynomially convex, and so is their union.
(Fig.\ \ref{Fig:wedges} shows $\wt L$ as the union
of black ellipses, while $\pi_2(\wt C) \setminus \wt L$ is shown in gray.)

%%%%%%%%%%%%%%%%%%%%%%%%%%%%%%%
%															%
%  Figure:   wedges           %
%                             %
%%%%%%%%%%%%%%%%%%%%%%%%%%%%%%%
%
\begin{figure}[ht]
\psset{unit=0.55cm, linewidth=0.5pt}  
\begin{pspicture}(-9,-5)(9,5.5)

%
%  two circles
%
\pscircle[fillstyle=solid,fillcolor=lightgray](0,0){2.5}
\pscircle[linewidth=0.3pt,linestyle=dashed](0,0){5}
\rput(0,0){$D$}
\rput(0,4){$\D_R$}

%
%  the curve \pi_2(\gamma)
%
\pscurve(2.3,1)(3,1.5)(4,2.5)(6,3)(8.5,4)
\pscurve(2,-1.45)(3,-2)(4,-2.2)(6,-3.5)(8.5,-4.5)
\pscurve(-1.8,1.7)(-3,2.5)(-4,3)(-6,4)(-8.5,4.5)
\pscurve(-2,-1.45)(-3,-2)(-4,-2.2)(-6,-3.5)(-8.5,-4.5)

%
%  Notation for the projected curves
%
\rput(8,0){$\pi_2(\gamma)$}
\psline[linewidth=0.2pt,arrows=->](7.9,0.5)(7,3.3)
\psline[linewidth=0.2pt,arrows=->](7.9,-0.5)(6.8,-3.7)

\rput(-8,0){$\pi_2(\gamma)$}
\psline[linewidth=0.2pt,arrows=->](-7.9,0.5)(-7,4.2)
\psline[linewidth=0.2pt,arrows=->](-7.9,-0.5)(-6.8,-3.8)

%
%  black ellipses on the right
%
\psellipse*(5,1.5)(0.8,0.45)
\psellipse*(5,-1.2)(0.7,0.35)
\psellipse*(3.7,0)(0.65,0.4)
\psellipse*(5.2,0)(0.65,0.25)

%
%  gray discs on the right
%
\pscircle[fillstyle=solid,fillcolor=lightgray,linestyle=dotted](2.3,-0.8){0.5}
\pscircle[fillstyle=solid,fillcolor=lightgray,linestyle=dotted](3.2,-1.3){0.4}
\pscircle[fillstyle=solid,fillcolor=lightgray,linestyle=dotted](3.2,-1.3){0.4}
\pscircle[fillstyle=solid,fillcolor=lightgray,linestyle=dotted](3.5,1.3){0.3}

%
%  black ellipses on the left
%
\psellipse*(-5,-1)(0.7,0.45)
\psellipse*(-4.7,1)(0.7,0.35)
\psellipse*(-3.7,0)(0.65,0.4)
\psellipse*(-5.2,0)(0.65,0.25)
\psellipse*(-5.1,2.1)(0.8,0.4)

%
%  gray discs on the left
%
\pscircle[fillstyle=solid,fillcolor=lightgray,linestyle=dotted](-2.3,0.8){0.5}
\pscircle[fillstyle=solid,fillcolor=lightgray,linestyle=dotted](-3.2,1.3){0.4}
\pscircle[fillstyle=solid,fillcolor=lightgray,linestyle=dotted](-3.2,1.3){0.4}
\pscircle[fillstyle=solid,fillcolor=lightgray,linestyle=dotted](-3.5,-1.3){0.3}
\pscircle[fillstyle=solid,fillcolor=lightgray,linestyle=dotted](-2.3,-0.8){0.6}

\end{pspicture}
\caption{Geometry in the $z_2$-plane.}
\label{Fig:wedges}
\end{figure}
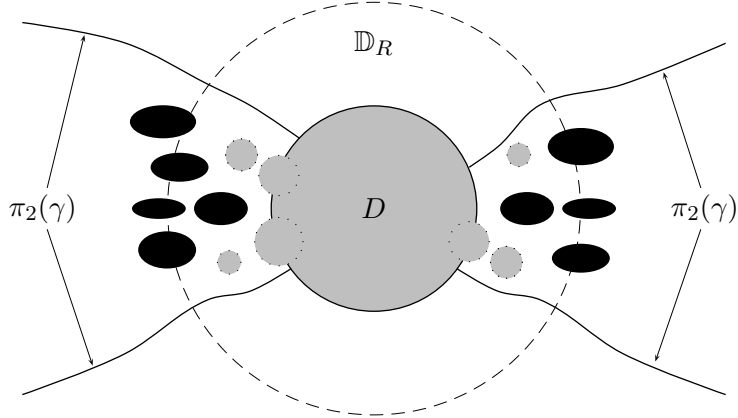

Choose a function $h_2\in \cO(\C)$ such that 
$|h_2|>R$ on $\wt L$ and $|h_2|$ is small on $\pi_2(\wt C) \setminus \wt L$.
Let $\tau_2(z_1,z_2)= (z_1 + h_2(z_2),z_2)$ and 
$\phi_2=\tau_2\circ \tau_1$.
The automorphism $\psi=\phi_1\circ\phi_2\in \Aut\C^2$
then clearly satisfies Lemma \ref{remove}. 

Note that $\phi_2(z_1,z_2)= \bigl(z_1 + h(z_2),z_2)$
with $h=h_1+h_2$, so it is possible to compactify
the construction of $\phi_2$ into one step. 
% We find our two-step construction somewhat easier to grasp.
\end{proof}

%%%%%%%%%%%%%%%%%%%%%%%%%%%%%%%%%%%%%%%%%
%																				%
%  SEC 4: PROOF OF THEOREM 1.1          %
%																				%
%%%%%%%%%%%%%%%%%%%%%%%%%%%%%%%%%%%%%%%%%
%
\section{Proof of Theorem \ref{main}}
\label{sec:proof}
We are now ready to prove Theorem \ref{main}. 
The construction is similar to the proof 
of Majcen's theorem \cite{Majcen-2009} as given in 
\cite[\S 8.10]{F:book}, but the induction scheme is 
altered and improved at several key points. 

Every holomorphic embedding with corners
will be assumed to be of the form (\ref{type}).

Let $\Omega\subset \P^1$ be a domain with countably many 
complementary components, none of which are points.
(We assume that there are infinitely many components, 
for otherwise the result is due to Globevnik and 
Stens\o nes \cite{Globevnik-Stensones}. Our proof
also applies in the latter case, but it could be made much simpler.)  
By the uniformization theorem of He and Schramm \cite{HeSchramm}
we may assume that $\Omega$ is a circled domain. By mapping one of the 
complementary discs in $\P^1\setminus \Omega$ onto the complement 
$\P^1\setminus \D$ of the unit disc $\D$ we may further assume that 
$\Omega=\D\setminus \cup_{j=1}^\infty \ol\triangle_j$, where
$\ol\triangle_j$ are pairwise disjoint closed discs in $\D$.  

We construct a proper holomorphic embedding 
$\Omega\hra \C^2$ by induction. 

Choose an exhaustion 
$
 	\emptyset= K_0 \subset  K_1\subset K_2 \subset 
 	\ldots \subset \cup_{j=1}^\infty K_j=\Omega
$ 
of $\Omega$ by compact, connected, $\cO (\Omega)$-convex sets with 
smooth boundaries, satisfying $K_j \subset \mathring K_{j+1}$ 
for $j=0,1,2,\ldots$.  
These conditions imply that for each index $j\in \N$ 
the set $\wh K_j\setminus K_j \subset  \D$ 
is a union of finitely many open discs, 
i.e., sets homeomorphic to the standard disc.

We begin the induction at $n=0$. Set $\Gamma_0=b\,\D$, $m_0=k_0=0$. 
Pick a point $c_0\in \Gamma_0$ and a number $\epsilon_0>0$. 
At the $n$-th step of the construction we shall obtain 
the following data:
\begin{itemize}
\item integers $m_n, k_n\in \N$,
\item a number $\epsilon_n$ such that $0<\epsilon_n < \frac{1}{2}\,\epsilon_{n-1}$
(the last inequality is void for $n=0$),
\item circles  $\Gamma_j=b\triangle_{i(j)}$ $(j=1,\ldots,k_n)$ 
from the family $\{b\triangle_i\}_{i\in\N}$, at least one in each connected 
component of $\widehat{K_{m_n}}\setminus K_{m_n}$, 
\item the domain $\Omega_n=\mathbb D\bs \cup_{j=1}^{k_{n}} \ol\triangle_{i(j)}$ 
with boundary $b \Omega_n = \cup_{j=0}^{k_n} \Gamma_j$,
\item points $c_j\in \Gamma_j$ for $j=0,\ldots,k_n$, 
\item numbers $\theta_j>0$ $(j=0,\ldots,k_n)$ with  $\sum_{j=0}^{k_n} \theta_j<2\pi$,
\item a holomorphic embedding with corners 
$f_n\colon \ol \Omega_n \hra \C^2$ such that 
the points $c_0,\ldots,c_{k_n}$ are $\pi_1$-exposed
with $\theta_j$-wedges  (see Def.\ \ref{def:exposed-wedge})
and $f_n$ is smooth near $b\Omega_n\setminus\{c_0,\ldots,c_{k_n}\}$,  
\item a rational shear with poles at the exposed points $f_n(c_j)$ 
of $f_n(b\Omega_n)$, 
\[
	g_{n}(z_1,z_2)=\bigg(z_1, z_2+
	\sum_{j=0}^{k_{n}}\frac{\beta_{j}}{z_1-\pi_1(f_{n}(c_j))}\bigg),
\]
such that $(\pi_2\circ g_n\circ f_n)(\Omega_n) \subset\C$
is a union of $\theta_j$-wedges whose closures 
intersect only at their common vertex $\infty\in \P^1$, and 
\item 
an automorphism $\phi_n$ of $\C^2$,
\end{itemize}
such that, setting 
\[
	F_{n-1} = \Phi_{n-1}\circ g_n\circ f_n,
	\qquad	
	\Phi_n=\phi_n\circ \Phi_{n-1} = \phi_n\circ\phi_{n-1} \ldots \circ \phi_1,
\]
the following conditions hold:
\begin{eqnarray}
  && |g_n\circ f_n(x) - g_{n-1}\circ f_{n-1}(x)|<\epsilon_n, \quad 
  x\in K_{m_n},   \label{eqn:pogoj-fn} \\
  && |\Phi_{n-1}\circ g_n\circ f_n(x) - \Phi_{n-1}\circ g_{n-1}\circ f_{n-1}(x)|<\epsilon_n, 
  \quad  x\in K_{m_n},   \label{eqn:pogoj-fn2} \\ 
  && \ol \B_{n-1} \cap F_{n-1}(\Omega_n)\subset F_{n-1}(\mathring{K}_{m_n}),       \label{pogoj_prop} \\
  &&  |\phi_n(z)-z| < \epsilon_n,  \quad z\in \ol{\B}_{n-1} \cup F_{n-1}(K_{m_n}), \label{pogoj_id} \\
	&& 	|\Phi_n \circ g_n \circ f_n(x)|>n, \quad x\in b\Omega_n \cup (\Omega_n\setminus \Omega).\label{pogoj_B}
\end{eqnarray}

\smallskip

\begin{remark}
\label{rem:opening}
Setting $J_{n} = \N\setminus\{i(j)\colon j=1,\ldots,k_{n}\}$, we have that
\[
	\Omega_n = \Omega \,\cup\, \cup_{j\in J_{n}} \ol\triangle_j,\qquad
	\Omega_n\setminus \Omega = \cup_{j\in J_{n}} \ol\triangle_j.
\]
Clearly $\D \supset \Omega_1\supset \Omega_2\supset \cdots \supset \Omega$, 
but the intersection $\bigcap_{j=1}^\infty \Omega_j$ need not equal $\Omega$.
That is, the set of all circles $\Gamma_j$ that get opened up 
in the course of the construction may be a proper subset 
of the family $\{b\triangle_i\}_{i\in \Z_+}$ 
of all boundary circles of $\Omega$. The only reason for opening 
a boundary circle contained in $\widehat{K_{m_n}}\setminus K_{m_n}$ 
is to insure that the image of $K_{m_n}$ in $\C^2$ becomes
polynomially convex; see (\ref{eq:polyconv}) below.
\qed\end{remark}

We begin the induction at $n=0$ by choosing an 
embedding $f_0(\zeta)=(\tau_0(\zeta),0)$ of $\ol\D$ in $\C\times\{0\}\subset\C^2$
with a $\theta_0$-wedge at the point $c_0\in\Gamma_0=b\D$
(see Theorem \ref{exposing}). We also choose a shear 	
\[
	g_{0}(z_1,z_2)=\left(z_1, z_2+ \frac{\beta_{0}}{z_1-\pi_1\circ f_{0}(c_0)}\right),
\]
sending the exposed point $\pi_1\circ f_0(c_0)=\tau_0(c_0)$ to infinity.
Let $\phi_0=\Phi_0=\Phi_{-1}= \Id$. 
Conditions (\ref{eqn:pogoj-fn}) -- (\ref{pogoj_id}) 
are then vacuous for $n=0$ (recall that $K_0=\emptyset$), 
and $(\ref{pogoj_B})$ is satisfied after a small translation
of the embedding $g_0\circ f_0\colon \ol\D\setminus \{c_0\} \hra\C^2$ 
which removes the image off the origin.

We now explain the inductive step $n\rightarrow n+1$.
By (\ref{pogoj_B}) there exists an integer $m_{n+1}>m_n$ such that
\begin{equation}
\label{eq:goodintersection}
	\ol \B_n \cap \bigl(\Phi_n\circ g_n\circ f_n(\Omega_n)\bigr) \subset
	\Phi_n\circ g_n\circ f_n(\mathring{K}_{m_{n+1}}).
\end{equation}
By the inductive hypothesis the polynomial hull $\wh K_{m_{n+1}}$ 
contains the boundary circles $\Gamma_j\subset b\Omega$ for $1\le j \le k_{n}$.
(This is vacuous if $n=0$.)
In each of the (finitely many) connected components
of $\widehat K_{m_{n+1}}\setminus K_{m_{n+1}}$ which does not contain
any of the above circles we pick another boundary circle of $\Omega$
(such exists since the set $K_{m_{n+1}}$ is $\cO(\Omega)$-convex); 
we label these additional curves $\Gamma_{k_n +1},\ldots, \Gamma_{k_{n+1}}$. 
As before, we have $\Gamma_j=b\triangle_{i(j)}$ for some index $i(j)$. Let 
\[
	\Omega_{n+1}=\D \bs \cup_{j=1}^{k_{n+1}} \ol\triangle_{i(j)}.
\]
Setting $J_{n+1} = \N\setminus\{i(j)\colon j=1,\ldots,k_{n+1}\}$, we have that
\[
	\Omega_{n+1} = \Omega \cup \cup_{j\in J_{n+1}} \ol\triangle_j.
\]
Each of these additional curves will now be opened up. 
Pick a point $c_j\in \Gamma_j$ for each 
$j=k_n+1,\ldots, k_{n+1}$ and positive numbers 
$\theta_{k_n+1},\ldots,\theta_{k_{n+1}}$
such that  $\sum_{j=0}^{k_{n+1}} \theta_j<2\pi$.
Also choose a number $\epsilon_{n+1}\in(0,\epsilon_n/2)$
such that any holomorphic map
$h\colon \Omega\to \C^2$ satisfying  
$||h-g_n\circ f_n||_{K_{m_{n+1}}} < 2\epsilon_{n+1}$
is an embedding on $K_{m_n}$.
Theorem \ref{exposing} furnishes a holomorphic embedding 
$f_{n+1}\colon \ol \Omega_{n+1}\hra \C^2$ with corners
such that $f_{n+1}$ agrees with $f_n$ 
to the second order at each of the points $c_0,\ldots, c_{k_{n}}$,
it additionally makes the boundary points $c_{k_n+1}, \ldots, c_{k_{n+1}}$
$\pi_1$-exposed with $\theta_j$-wedges,
and it approximates $f_n$ as close as desired
outside of small neighborhoods of these points.
The image $f_{n+1}(\ol \Omega_{n+1})$ stays as close as desired 
to the union of $f_n(\ol \Omega_{n+1})$ with the family 
of arcs that were attached to this set in order 
to expose the points $c_{k_n+1}, \ldots, c_{k_{n+1}}$.
In particular, we insure that none of the complex lines 
$z_1=\pi_1\circ f_{n+1}(c_j)$ for $j=k_{n}+1,\ldots,k_{n+1}$ 
intersects the set $\Phi_n^{-1}(\ol{\B}_n)$. 
The rational shear 
\[
	g_{n+1}(z_1,z_2)= g_n(z_1,z_2) +  
	\bigg(0,\sum_{j=k_{n}+1}^{k_{n+1}} 
	\frac{\beta_{j}}{z_1-\pi_1(f_{n+1}(c_j))} \bigg)
\]
sends the exposed points  
$f_{n+1}(c_0), \ldots, f_{n+1}(c_{k_{n+1}})$ to infinity. 
A suitable choice of the arguments of the numbers $\beta_j\in\C^*$ 
for $j=k_{n}+1,\ldots,k_{n+1}$ insures that, in a neigborhood of infinity,  
$(\pi_2\circ g_{n+1}\circ f_{n+1})(\overline\Omega_{n+1})$ 
is a union of pairwise disjoint $\theta_j$-wedges with 
the common vertex at $\infty\in\P^1$; at the same time 
the absolute values $|\beta_j|>0$ can be chosen arbitrarily small
in order to obtain good approximation of $g_n$ by $g_{n+1}$. 

Set $F_{n}=\Phi_n\circ g_{n+1} \circ f_{n+1}$.
If the approximations of $f_n$, $g_n$ by $f_{n+1}$,
$g_{n+1}$, respectively, were close enough, then 
the conditions (\ref{eqn:pogoj-fn})--(\ref{pogoj_prop}) 
hold with $n$ replaced by $n+1$. 

Since every connected component of $\wh{K}_{m_{n+1}} \setminus K_{m_{n+1}}$
contains at least one of the points $c_1,\ldots,c_{m_{n+1}}$ which
$F_n$ sends to infinity, 
the set $F_n(K_{m_{n+1}}) \subset \C^2$ is polynomially convex.
(See \cite[Prop.\ 3.1]{Wold} for the details of this argument.)
From (\ref{eq:goodintersection}) we also infer that 
$
	\ol \B_n \cap F_n(\Omega_{n+1}) \subset F_n(\mathring{K}_{m_{n+1}})
$
provided that the approximations were close enough.
It follows that the set
\begin{equation}
\label{eq:polyconv}
	L_n : =\ol \B_n \cup F_n(K_{m_{n+1}}) \subset \C^2
\end{equation}
is polynomially convex. 

Now comes the last, and the main step in the induction: 
We use Lemma \ref{remove} to find an automorphism $\phi_{n+1}\in \Aut\C^2$
which satisfies conditions (\ref{pogoj_id}) and (\ref{pogoj_B}) 
with $n$ replaced by $n+1$.  We look for $\phi_{n+1}$ of the form
\[
	\phi_{n+1}=\Phi_n\circ\psi\circ\Phi^{-1}_n,\quad \psi\in\Aut\C^2.
\]
(Therefore $\Phi_{n+1}=\phi_{n+1}\circ \Phi_n= \Phi_n\circ\psi$.)
Pick a small constant $\delta>0$ such that for any pair of points
$z,z'\in\C^2$, with $z\in \Phi_n^{-1}(L_n)$ 
and $|z-z'|<\delta$, we have 
$|\Phi_n(z)-\Phi_n(z')| <\epsilon_{n+1}$. 
(Such $\delta$ exists by continuity of $\Phi_{n}$.) 
We also pick a large constant $R>0$
such that $|\Phi_n(z)|>n+1$ for all $z\in \C^2$ with $|z|>R$.
(Equivalently, $\Phi_n^{-1}(\ol{\B}_n) \subset\ol{\B}_R$.) 
Since the set $\Phi_n^{-1}(L_n)$ is polynomially convex,
Lemma \ref{remove} furnishes an automorphism $\psi\in\Aut\C^2$ 
satisfying the following two conditions:
\begin{itemize}
\item[\rm (4.4')] $|\psi(z)-z|<\delta$ for 
$z\in \Phi_n^{-1}\bigl(L_n)$, and
\item[\rm (4.5')] $|\psi(z)|>R$ for 
$z\in g_{n+1}\circ f_{n+1}\bigl(b\Omega_{n+1} \,\cup\, 
\cup_{j\in J_{n+1}} \ol\triangle_j\bigr)$.
\end{itemize}
By (\ref{pogoj_prop}) (applied with $n+1$) the two sets appearing in these conditions
are disjoint. It is now immediate that $\phi_{n+1}$ 
satisfies conditions (\ref{pogoj_id}), (\ref{pogoj_B}).

This completes the induction step, so the induction may proceed.

We now conclude the proof. 
By (\ref{eqn:pogoj-fn}) and the choice of the numbers 
$\epsilon_n>0$ we see that the limit map
$%\[
	G=\lim_{n\to \infty} g_n\circ f_n\colon \Omega \to  \C^2
$%\]
is a holomorphic embedding. 
Condition (\ref{pogoj_id}) implies that the sequence 
$\Phi_n\in\Aut\C^2$ converges on the domain  
$O = \cup_{n=2}^\infty \Phi_n^{-1} (\ol\B_{n-1}) \subset \C^2$
to a Fatou-Bieberbach map $\Phi=\lim_{n\to\infty}\Phi_n \colon O \to\C^2$,
i.e., a biholomorphic map of $O$ onto $\C^2$
(c.f.\ \cite[Corollary 4.4.2]{F:book}).  
Conditions (\ref{eqn:pogoj-fn2}) and (\ref{pogoj_id}) show that the sequence 
$\Phi_n$ converges on $G(\Omega)$, so $G(\Omega) \subset O$.
From (\ref{pogoj_prop}) and (\ref{pogoj_B}) we see that 
$G$ embeds $\Omega$ properly into $O$. 
Hence the map 
\[
	F=\Phi\circ G = \lim_{n\to\infty} \Phi_n \circ g_n\circ f_n \colon \Omega \hra \C^2
\]
is a proper holomorphic embedding of $\Omega$ into $\C^2$.
\qed

\begin{remark}
If we choose an initial holomorphic embedding 
$f_0\colon \ol\D\hra \C^2$, a compact set $K=K_0\subset \Omega$
and a number $\epsilon>0$, then the above construction 
is easily modified to yield a proper holomorphic embedding $F \colon \Omega\hra\C^2$ 
satisfying $||F-f||_K <\epsilon$. Furthermore, we can choose 
$F$ to agree with $f$ at finitely many points of $\Omega$.
All these additions are standard.
\end{remark}

%%%%%%%%%%%%%%%%%%%%%%%%%%%%%%%%%%%%%%%%%%%%%%%%%%%%%%%%%%%%
%																													 %
%   SEC. 5: DOMAINS WITH PUNCTURES AND CANTOR SETS         %
%																													 %
%%%%%%%%%%%%%%%%%%%%%%%%%%%%%%%%%%%%%%%%%%%%%%%%%%%%%%%%%%%%
%
\section{Domains with punctures}
\label{sec:punctures}
Theorem \ref{main} can be extended 
to domains $\Omega$ in $\P^1$ with certain boundary punctures. 
By a {\em puncture} we mean a connected component of  $\P^1\setminus \Omega$ which is a point.
We say that a domain $\Omega\subset\mathbb P^1$ is a \emph{generalized circled domain} if 
each complementary component is either a round disc or a puncture.  
By He and Schramm \cite{HeSchramm}, any domain in $\P^1$ 
with at most countably many boundary components is conformally equivalent 
to a generalized circled domain.  

Our main result in this direction is the following.

%
%
%  SECOND MAIN THEOREM
%
%
\begin{theorem}
\label{main2}
Let $\Omega$ be a generalized circled domain in $\mathbb P^1$.
If all but finitely many punctures in 
the complement $K:=\mathbb P^1\setminus\Omega$ 
are limit points of discs in $K$, then $\Omega$ embeds 
properly holomorphically in $\mathbb C^2$.
\end{theorem}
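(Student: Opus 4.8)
The plan is to reduce Theorem \ref{main2} to the already-proven Theorem \ref{main} by showing that the finitely many ``bad'' punctures (those not accumulated by discs) are harmless, and that the ``good'' punctures (limit points of discs) can be absorbed into the inductive scheme of \S\ref{sec:proof} essentially for free. First I would split the complement $K=\mathbb P^1\setminus\Omega$ as $K = \bigl(\cup_j \ol\triangle_j\bigr) \cup P_{\mathrm{good}} \cup P_{\mathrm{bad}}$, where the $\ol\triangle_j$ are the complementary discs, $P_{\mathrm{good}}$ is the (possibly infinite) set of punctures lying in the cluster set of $\cup_j \ol\triangle_j$, and $P_{\mathrm{bad}}=\{p_1,\dots,p_N\}$ is the finite set of remaining punctures. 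The key structural point I would establish at the outset is that a good puncture $p$, being a limit of discs $\ol\triangle_{j_\nu}\to p$, is automatically carried off to infinity by the construction the moment one opens up and pushes out the nearby discs: since $p$ lies in the closure of $\cup_\nu \ol\triangle_{j_\nu}$ and each such disc eventually satisfies $|\Phi_n\circ g_n\circ f_n|>n$ on it by (\ref{pogoj_B}), the point $p$ is forced to the ideal boundary at infinity along with its surrounding discs, so it never has to be exposed or opened individually.

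The heart of the matter is then the finitely many bad punctures $p_1,\dots,p_N$. My approach would be to treat each $p_i$ by the same ``expose a boundary point and send it to infinity via a rational shear'' mechanism already used for the disc boundaries, but adapted to a puncture. Concretely, I would enlarge the initial embedding so that the starting domain $\ol\Omega_0$ is a disc with the $N$ punctures removed (or, equivalently, with small circles around them to be opened), and at the base step use Theorem \ref{exposing} to create $\pi_1$-exposed $\theta$-wedges not only at the chosen boundary point $c_0$ but at exposing arcs attached near each $p_i$; a rational shear $g$ of the form in Lemma \ref{remove}, with an extra pole at each $\pi_1(f(p_i))$, then sends these exposed punctures to infinity. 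Because there are only finitely many of them, the angle budget $\sum\theta_j<2\pi$ is not threatened: I allocate a fixed total angle, say $\pi$, to the disc-boundary wedges opened during the induction and reserve the remaining angle for the $N$ fixed puncture-wedges. The rest of the induction in \S\ref{sec:proof} then runs verbatim: one exhausts $\Omega$ by $\cO(\Omega)$-convex compacta $K_j$, opens at least one disc boundary in each component of $\wh K_{m_n}\setminus K_{m_n}$ to guarantee polynomial convexity of the images (\ref{eq:polyconv}), and applies Lemma \ref{remove} to push everything outside $\ol\B_n$ while controlling convergence on the interior, producing a Fatou--Bieberbach domain $O$ and a proper embedding $G\colon\Omega\to O$ as before.

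The one genuinely new estimate I expect to need is the local behaviour of the rational shear at an exposed \emph{puncture} rather than an exposed boundary point of a disc. This is where I anticipate the main obstacle. At a disc boundary, the wedge has two sides coming from the two local components of $b\triangle_j$, and Lemma \ref{remove} exploits the fact that $\pi_2\circ g\circ f$ maps a neighborhood injectively onto a wedge whose closure meets the others only at $\infty$; at a puncture the local picture is a full punctured neighborhood, so I must verify that after exposing and applying the shear the image of a punctured-disc neighborhood of $p_i$ is still a single clean $\theta_i$-wedge at $\infty$ with the correct disjointness from the other wedges. The argument in the last part of the proof of Lemma \ref{remove}, together with the remark preceding Question~1, suggests this is exactly the delicate point the authors flag; I would handle it by choosing the exposing arc and the argument of the residue $\beta_i$ so that the two ``lips'' of the punctured neighborhood are folded into a wedge of opening $\theta_i$, and then check the polynomial-convexity inputs (the analogues of the sets in (\ref{eq:Dandcurves}) and $\wt C$, $\wt L$) go through unchanged since the finitely many punctured neighborhoods contribute only finitely many additional wedges. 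Once this local analysis is in place, conditions (\ref{eqn:pogoj-fn})--(\ref{pogoj_B}) are verified exactly as in \S\ref{sec:proof}, and the limit $F=\Phi\circ G$ is the desired proper holomorphic embedding of $\Omega$ into $\mathbb C^2$.
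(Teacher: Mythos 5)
Your global strategy --- splitting the punctures into the finitely many bad ones (not accumulated by discs) and the good ones, absorbing the good ones into the induction because every component of $\wh K_m\setminus K_m$ containing such a puncture also contains a disc whose opening keeps the image of $K_m$ polynomially convex, and sending the bad ones to infinity by a rational shear with poles at them --- is exactly the paper's. But the step you yourself single out as the main obstacle is resolved incorrectly, and your proposed fix cannot work. You want to use Theorem \ref{exposing} to create a $\pi_1$-exposed $\theta_i$-wedge at each bad puncture and to ``fold the two lips of the punctured neighborhood into a wedge of opening $\theta_i$.'' This is impossible: $f_n$ is bounded and holomorphic on a punctured neighborhood of $p_i$, hence extends holomorphically across $p_i$, so $\pi_2\circ g_n\circ f_n$ is meromorphic with a pole at $p_i$; applying the open mapping theorem to its reciprocal, the image of \emph{every} punctured neighborhood of $p_i$ contains a full punctured neighborhood of $\infty$, and no choice of exposing arc or of $\arg\beta_i$ can confine it to a wedge of angle $<2\pi$. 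Moreover, Theorem \ref{exposing} is not applicable at a puncture: it exposes smooth points of boundary \emph{curves}, and at $p_i$ there is no boundary curve to push along an attached arc. (Your parenthetical alternative --- cutting small circles around the $p_i$ and ``opening them later'' --- changes the domain, since those punctured discs are part of $\Omega$, not complementary components.) In fact no exposing is needed at all: for the initial embedding $\zeta\mapsto(\zeta,0)$ the fiber $\pi_1^{-1}(p_i)$ meets the image precisely in the one point $(p_i,0)$, so the punctures are automatically $\pi_1$-exposed; the induction only has to preserve this property and keep poles of $g_n$ at these points, and your angle-budget bookkeeping for puncture-wedges becomes vacuous.

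The paper's resolution of the difficulty is the opposite of yours: one \emph{accepts} that $\pi_2\colon \ol X_n\to\C$ fails to be injective near infinity (there are extra sheets over an entire neighborhood of $\infty$ coming from the bad punctures) and shows this is harmless. The reason is that the only sets Lemma \ref{remove} must move out of a given ball are the boundary curves and the complementary discs of $\Omega$; since by hypothesis the discs $\ol\Delta_i$ do not accumulate at $p_1,\ldots,p_l$, those sets still project under $\pi_2$ bijectively onto pairwise disjoint wedges near $\infty$, so the polynomial-convexity arguments for the analogues of $\wt C$ and $\wt L$, and the shear constructions built on them, apply to these sets unchanged; the extra sheets carry no constraint in Lemma \ref{remove} and are simply irrelevant to the automorphism being constructed. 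With your second and third paragraphs replaced by this argument, the rest of your scheme (the treatment of the good punctures and the verbatim rerun of \S\ref{sec:proof}) does agree with the paper.
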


\begin{corollary}
\label{cor:circled-C}
If $\Omega$ is a circled domain in $\C$ or in $\P^1$ 
and $p_1,\ldots,p_l\in \Omega$ is an arbitrary 
finite set of points in $\Omega$, then the domain 
$\Omega\setminus \{p_1,\ldots,p_l\}$ admits a proper 
holomorphic embedding in $\C^2$.
\end{corollary}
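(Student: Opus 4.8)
The plan is to deduce Corollary \ref{cor:circled-C} directly from Theorem \ref{main2} by viewing the finitely many punctures $p_1,\ldots,p_l$ as complementary components that happen to be points. First I would observe that $\Omega\setminus\{p_1,\ldots,p_l\}$ is a generalized circled domain: its complement in $\P^1$ consists of the closed round discs comprising $\P^1\setminus\Omega$ together with the finitely many points $p_1,\ldots,p_l$, each of which is a connected component that is a single point, i.e.\ a puncture in the sense of \S\ref{sec:punctures}. When $\Omega\subset\C$ one of the complementary components is the closed disc $\P^1\setminus\C$ containing $\infty$ (equivalently, after a fractional linear change of coordinates, we may assume all complementary discs and all $p_j$ lie in $\D$), so the hypothesis of being a generalized circled domain is met in either case.

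The one point requiring verification is the extra hypothesis of Theorem \ref{main2}: that all but finitely many of the punctures are limit points of discs in the complement $K:=\P^1\setminus(\Omega\setminus\{p_1,\ldots,p_l\})$. Here the punctures of $\Omega\setminus\{p_1,\ldots,p_l\}$ are exactly the points $p_1,\ldots,p_l$, since $\Omega$ itself is a genuine circled domain and hence has \emph{no} point components. As there are only finitely many punctures in total, the condition ``all but finitely many punctures are limit points of discs'' is satisfied vacuously. Thus Theorem \ref{main2} applies and yields a proper holomorphic embedding $\Omega\setminus\{p_1,\ldots,p_l\}\hra\C^2$, which is the assertion of the corollary.

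I do not expect a genuine obstacle here; the corollary is essentially the simplest special case of Theorem \ref{main2}, in which the set of punctures is finite so that the clustering hypothesis on punctures is automatic. The only mild care needed is the normalization placing $\Omega$ inside $\D$ (or treating the unbounded complementary component of a domain in $\C$ as one of the round discs in $\P^1$), after which the identification of $\Omega\setminus\{p_1,\ldots,p_l\}$ as a generalized circled domain is immediate and the theorem can be invoked verbatim.
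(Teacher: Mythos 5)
Your proposal is correct and is exactly the paper's (implicit) argument: the corollary is stated as an immediate consequence of Theorem \ref{main2}, because a circled domain minus finitely many interior points is a generalized circled domain with only finitely many punctures, so the hypothesis that all but finitely many punctures cluster on discs holds vacuously. One small correction: when $\Omega\subset\C$, the set $\P^1\setminus\C=\{\infty\}$ is a single point, not a closed disc, so $\infty$ is simply one additional puncture of $\Omega\setminus\{p_1,\ldots,p_l\}$ viewed in $\P^1$; this does not affect your argument, since the total number of punctures (at most $l+1$) is still finite and Theorem \ref{main2} applies verbatim.
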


By He and Schramm, Corollary \ref{cor:circled-C} 
also holds for $\Omega\setminus \{p_1,\ldots,p_l\}$,
where $\Omega \subset \P^1$ is a domain 
as in Theorem \ref{main}.

\begin{proof}[Proof of Theorem \ref{main2}.] 
We make the following modifications in the proof of Theorem \ref{main}. 
We may assume as before that $\Omega$ 
%
% Remark: this reference is undefined.
% is of the form (\ref{eq:circled-p}) and 
%
is contained in the unit 
disc $\D$, with $\Gamma_0=b\D$ being one of its boundary components.
Let $f_0\colon \Omega\hra\C^2$ be the embedding $\zeta\mapsto(\zeta,0)$. 
Assume that $p_1,\ldots,p_l \in b\Omega$ are the finitely many punctures 
which do not belong to the cluster set of $\cup_{i} \ol \Delta_i$.
A rational shear 
$g_0(z_1,z_2)=\left( z_1,z_2 + \sum_{j=1}^l \frac{\beta_j}{z_1-p_j} \right)$
sends the points $p_1,\ldots,p_l$ to infinity. We then 
apply the rest of the proof exactly as before, 
insuring at each step of the inductive construction
that the embedding with corners $f_n\colon \ol\Omega_n \hra\C^2$
agrees with $f_0$ at the points $p_1,\ldots,p_l$ and leaves
these points $\pi_1$-exposed, and the shear 
$g_n$ has poles at these points. The coordinate 
projection $\pi_2 \colon \ol X_n= g_n\circ f_n(\ol\Omega_n) \to\C$ 
is no longer injective near infinity due to the poles of $g_n$
at the points $p_1,\ldots,p_l$. However, since the discs $\ol \Delta_i$ do 
not accumulate on any of the points $p_1,\ldots,p_l$, the discs 
$(g_n\circ f_n)(\ol\Delta_i) \subset X_n$ which approach infinity
are still mapped bijectively to a finite union of pairwise disjoint wedges
at $\infty$, and the additional sheets of the projection 
$\pi_2 \colon \ol X_n \to\C$ 
are irrelevant for the construction of the automorphism
which removes the discs and the boundary curves of $X_n$ 
out of a given ball in $\C^2$.  

The remaining punctures $p_\lambda$ in $b\Omega$ 
(a possibly uncountable set) 
can be treated in the same way as the complementary discs.
Indeed, since each of these points is the limit point of the 
sequence of discs $\Delta_i$, every connected component of the set 
$\widehat K_{m}\setminus K_m$ (where $K_m$ is a sequence of
compacts exhausting the domain $\Omega$, see \S \ref{sec:proof})
which contains one of these punctures $p_\lambda$ also contains 
a disc $\ol\Delta_i$. By exposing a boundary point 
of $\Delta_i$ and removing it to infinity by a rational
shear we thus insure that the image of $p_\lambda$ does not belong 
to the polynomial hull of the image of $K_m$ in $\C^2$.
(See Remark \ref{rem:opening}.) 
The conclusion of Remark \ref{rem:closed} is still valid,
and hence the arguments in the proof of Theorem \ref{main} 
concerning moving compact sets by automorphisms of $\C^2$ 
still apply without any changes.  
\end{proof}

%
%
%  EXAMPLE: A CANTOR SET IN THE COMPLEMENT
%
%

\begin{example}
\label{ex:referee}
Assume that $E\subset\mathbb P^1$ is any compact totally disconnected set.
(In particular, $E$ could be a Cantor set). Then we may choose a sequence 
of pairwise disjoint closed round discs $\overline \triangle_j \subset \P^1\setminus E$ 
such that each point of $E$ is a cluster point of the sequence $\{\triangle_j\}$
and such that $\Omega:=\mathbb P^1\setminus (E\cup(\cup_j\overline\triangle_j))$
is a domain.  Then $\Omega$ embeds properly in $\mathbb C^2$.

There exists a Cantor set in $\P^1$ whose complement
embeds properly holomorphically into $\C^2$ (Orevkov \cite{Orevkov}),
but it is an open problem whether this holds for each Cantor set.
\qed\end{example}

{\bf Acknowledgements.} 
We express our sincere thanks to an anonymous referee who 
contributed many useful remarks and suggestions which helped us to improve 
the presentation. We also thank Frank Kutzschebauch for 
communicating to us his observation that our proof of Theorem \ref{main}
also yields Theorem \ref{main2}. 
Research of the first named author was supported by grants 
P1-0291 and J1-2152, Republic of Slovenia.
Research of the second named author was supported 
by the grant NFR-209751/F20 from the Norwegian Research Council.

\bibliographystyle{amsplain}

\end{document}